\documentclass[9pt]{amsart}
\usepackage{latexsym, amssymb, amsmath, mathrsfs, amsfonts}

\newtheorem{thm}{Theorem}[section]

\theoremstyle{remark}
\newtheorem{rmk}[thm]{Remark}
\theoremstyle{definition}

\title{The optimal diameter estimate for positive Yamabe metrics}

\author{Kazuo Akutagawa${}^*$}
\email{akutagawa@math.chuo-u.ac.jp}
\address{Department of Mathematics, Chuo University,
Tokyo, Japan}

\author{Mijia Lai}
\email{laimijia@sjtu.edu.cn}
\address{School of Mathematical Science, Shanghai Jiao Tong University, Shanghai 200240, China}

\author{Chao Li${}^{**}$}
\email{chaoli@nyu.edu}
\address{Courant Institute of Mathematical Sciences, New York University, USA}

\author{Harish Seshadri}
\email{harish@iisc.ac.in}
\address{Department of Mathematics, Institute of Science, Bangalore, India}

\author{Zhixin Wang${}^{***}$}
\email{jhin@sjtu.edu.cn}
\address{School of Mathematical Science, Shanghai Jiao Tong University, Shanghai 200240, China}

\thanks{${}^*$\
supported in part by the Grants-in-Aid for Scientific Research (C),
Japan Society for the Promotion of Science, No.~24K06718.}

\thanks{${}^{**}$\
partially supported by NSF grant DMS-2303624 and a Sloan Fellowship.}

\thanks{${}^{***}$\ supported in part by the China Postdoctoral Science Foundation No. 2025M773102.}

\date{September, 2026
}
\begin{document}
\maketitle
\markboth{optimal diameter estimate}
{Kazuo Akutagawa, Mijia Lai, Chao Li, Harish Seshadri, Zhixin Wang}

\begin{abstract}
In this paper, we prove the optimal diameter estimate for positive Yamabe metrics on a connected, closed manifold. It is a scalar curvature version of both classical Myers' diameter estimate and Cheng's maximal diameter theorem on Ricci curvature.
\end{abstract}
\maketitle

\section{Introduction and main result}

Ricci curvature reveals much on the global geometry and topology of manifolds (cf.\,\cite{Gromov-Book, GHL}).
Two of such results on Ricci curvature are classical Myers' diameter estimate and Cheng's maximal diameter theorem below.

\begin{thm}[Myers\,\cite{M}, Cheng\,\cite{Cg}]
Let $g$ be a Riemannian metric on a connected, complete $n$-manifold $N\,(n \geq 3)$.
Assume that the Ricci curvature ${\rm Ric}_g$ of $g$ satisfies \\
${\rm Ric}_g \geq (n- 1)k^2\cdot g$\ $($ for some $k > 0$ $)$.
Then, the following statements hold $:$ \\
$({\rm i})$\ \ ${\rm diam}(N, g) \leq \frac{\pi}{k}$, \\
$({\rm ii})$\ \ $N$ is compact,\\
$({\rm iii})$\ \ $|\pi_1(N)| < \infty$. \\
Moreover, if the equality ${\rm diam}(N, g) = \frac{\pi}{k}$ holds, then $(N, g)$ is isometric to $S^n(\frac{1}{k})$,
where $S^n(\frac{1}{k})$ denotes the round $n$-sphere of  radius $\frac{1}{k} > 0$ in $\mathbb{R}^{n+1}$.
\end{thm}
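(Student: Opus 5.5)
To prove (i), I would use the second variation of arc length along a minimizing geodesic. Since $N$ is complete, Hopf--Rinow says any two points $p,q$ are joined by a minimizing unit-speed geodesic $\gamma:[0,L]\to N$ with $L=d(p,q)$. Suppose $L>\pi/k$. Choose a parallel orthonormal frame $e_1,\dots,e_{n-1}$ along $\gamma$, perpendicular to $\dot\gamma$, and take the variation fields $V_i(t)=\sin(\pi t/L)\,e_i(t)$. These vanish at the endpoints. Summing the index forms gives
\[
\sum_{i=1}^{n-1} I(V_i,V_i)=\int_0^L\Bigl((n-1)\tfrac{\pi^2}{L^2}\cos^2\tfrac{\pi t}{L}-\sin^2\tfrac{\pi t}{L}\,{\rm Ric}_g(\dot\gamma,\dot\gamma)\Bigr)dt .
\]
Using ${\rm Ric}_g\ge (n-1)k^2 g$, this is at most $(n-1)\tfrac{L}{2}\bigl(\tfrac{\pi^2}{L^2}-k^2\bigr)$, which is negative. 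So some $V_i$ has negative second variation, contradicting minimality. Hence $d(p,q)\le \pi/k$ for all $p,q$, which is (i).

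Statement (ii) follows at once. $N$ is complete with bounded diameter, so closed metric balls are compact by Hopf--Rinow, and therefore $N$ is compact. For (iii), pull $g$ back to the universal cover $\tilde N$. The pullback is complete and satisfies the same Ricci bound, so by (ii) $\tilde N$ is compact. The covering $\tilde N\to N$ then has finite fibres, and these fibres are in bijection with $\pi_1(N)$, so $|\pi_1(N)|<\infty$.

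The rigidity statement is the main obstacle. I would follow Cheng's argument via Bishop--Gromov volume comparison. Take $p,q$ with $d(p,q)=\pi/k$. By the triangle inequality, the balls $B(p,\pi/2k)$ and $B(q,\pi/2k)$ are disjoint. Bishop--Gromov says $r\mapsto {\rm Vol}\,B(x,r)/V_k(r)$ is nonincreasing, where $V_k$ is the model volume in $S^n(1/k)$. Since $B(x,\pi/k)=N$, this gives ${\rm Vol}\,B(x,\pi/2k)\ge \tfrac12{\rm Vol}(N)$ for $x=p,q$. The two balls are disjoint, so both inequalities must be equalities. Equality in Bishop--Gromov then forces ${\rm Vol}\,B(p,r)=\tfrac{{\rm Vol}(N)}{{\rm Vol}(S^n(1/k))}V_k(r)$ for all $r$.

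Next I would pass from this volume equality to a metric statement. Equality in the Laplacian comparison $\Delta r\le (n-1)k\cot(kr)$ holds almost everywhere, and this should force the metric to be $dr^2+k^{-2}\sin^2(kr)\,g_{S^{n-1}}$ in polar coordinates about $p$ on the segment domain. Showing that the cut locus of $p$ is just the point $q$ needs care. Equality of volumes means the segment domain has full measure inside the model ball. Every geodesic from $p$ must minimize up to length $\pi/k$, since otherwise a set of positive measure would be lost, and all these geodesics must end at $q$. Once that is established, the exponential map at $p$ gives an isometry from $S^n(1/k)$ onto $N$. I expect the delicate point to be this identification of the cut locus, since Jacobi field rigidity along each geodesic then gives the isometry directly.
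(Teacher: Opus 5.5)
The paper does not prove this theorem. It quotes it as classical background from Myers and Cheng, so there is no in-paper argument to compare with.

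Your proof of (i)--(iii) is Myers' original second-variation argument and is correct as written. The bound $\sum_i I(V_i,V_i)\le (n-1)\frac{L}{2}\bigl(\frac{\pi^2}{L^2}-k^2\bigr)$ is right, and the universal-cover argument for (iii) is standard.

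For the rigidity you use the volume-comparison proof found in modern texts (often attributed to Shiohama). Cheng's original proof is spectral instead:
\begin{itemize}
\item his Dirichlet eigenvalue comparison bounds the first Dirichlet eigenvalue of each disjoint ball $B(p,\frac{\pi}{2k})$, $B(q,\frac{\pi}{2k})$ by that of a hemisphere of $S^n(\frac1k)$, which is $nk^2$;
\item gluing the two eigenfunctions with opposite signs into a mean-zero test function gives $\lambda_1(N)\le nk^2$;
\item Lichnerowicz gives $\lambda_1(N)\ge nk^2$, and Obata's theorem finishes.
\end{itemize}
That route hands all the rigidity to Obata and never touches the cut locus. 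Yours needs only Jacobi-field comparison but must control the cut locus by hand.

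The paper's proof of its main theorem sits between the two. With only scalar curvature controlled, there is no pointwise Jacobi-field comparison, so the Sobolev inequality (1) replaces the Ricci bound. The Bakry--Ledoux equality case produces an Obata-type function $X$ with $\Delta X=-nX$ and $|dX|^2=1-X^2$. The volume is then computed by a level-set/coarea argument much like yours, before Aubin's and Obata's rigidity are invoked.

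Three steps of your sketch should be tightened; none is a real gap.
\begin{itemize}
\item \textbf{Equality for all $r$.} Going from equality at the radii $\frac{\pi}{2k},\frac{\pi}{k}$ to equality for all $r$ uses the monotonicity of the area ratio $A(r)/A_k(r)$, not just of the volume ratio. The volume ratio is a weighted running average of $A/A_k$, so its constancy on $[\frac{\pi}{2k},\frac{\pi}{k}]$ forces $A/A_k\equiv 1$ on $(0,\frac{\pi}{k})$. In particular, your constant ${\rm Vol}(N)/{\rm Vol}(S^n(\frac1k))$ equals $1$. Also work with the closed ball: the open ball $B(p,\frac{\pi}{k})$ misses $q$, which is harmless because distance spheres are null sets.
\item \textbf{All geodesics from $p$ end at $q$.} This follows at once from the warped form $dr^2+k^{-2}\sin^2(kr)\,g_{S^{n-1}}$. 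For $\rho<\frac{\pi}{k}$, the points $\exp_p(\rho\theta)$ and $\exp_p(\rho\theta')$ are at distance at most $\pi\sin(k\rho)/k$, which tends to $0$. Hence $\exp_p(\frac{\pi}{k}\theta)$ is independent of $\theta$ and equals $q$. The induced map $S^n(\frac1k)\to N$ is then a bijection that is a local isometry away from one point. It is therefore distance preserving, since deleting a point does not change distances when $n\ge 2$, and hence a smooth isometry by Myers--Steenrod.
\item \textbf{``Positive measure would be lost.''} This step uses continuity of the cut-time function $\theta\mapsto c(\theta)$.
\end{itemize}
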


Clearly, one cannot relax the Ricci lower bound to a lower bound on scalar curvature,
as the following typical example shows :
Let $g_{r, s}$ denote the standard product metric on $S^{n-\ell}(r) \times S^{\ell}(s)\ (n \geq 3, \ell \geq 1\ {\rm with}\ n - \ell \geq 2)$.
Then, if $r \leq \frac{1}{k}\sqrt{\frac{(n-\ell)(n-\ell-1)}{n(n-1)}}$, 
then $R_{g_{r, s}} = \frac{(n-\ell)(n-\ell-1)}{r^2} + \frac{\ell(\ell-1)}{s^2} \geq n(n-1)k^2$.
But, ${\rm diam}(S^{n-\ell}(r) \times S^{\ell}(s), g_{r,s}) \geq \pi s \rightarrow \infty$\ as\ $s \rightarrow \infty$.

Yamabe metrics of positive scalar curvature ({\it positive Yamabe metrics} for brevity) 
enjoy two basic properties parallel to classical consequences of a metric with Ricci curvature bounded below by a constant. 
If $g$ is a Yamabe metric
with $R_g>0$, then
\begin{equation}\notag
 \lambda_1(-\Delta_g)\geq \frac{R_g}{n-1},
\end{equation}
and
\begin{equation}\notag
 R_g\operatorname{Vol}(M,g)^{2/n}
 \leq n(n-1)\operatorname{Vol}(S^n, g_{{}_{S^n}})^{2/n}.
\end{equation}
Here $\lambda_1(-\Delta_g)$ denotes the first nonzero eigenvalue and
$g_{{}_{S^n}}$ is the unit round metric. The first inequality is the
Yamabe analogue of the Lichnerowicz estimate, while the second is the
analogue of Bishop's volume comparison; see
\cite[Proposition~2.1]{A2}.

In view of these parallels, it is natural to seek a Myers type
diameter estimate for positive Yamabe metrics. In fact, the first-named author had previously obtained the following 
(see the figure in \cite[page 241]{Ak} for a typical example when $\ell = 1$) :

\begin{thm}$($\cite[Proposition\,2.2]{Ak}$)$
Let $\overset{\lor}{g}$ be a positive Yamabe metric on a connected, closed $n$-manifold $M\,(n \geq 3)$.
Assume that $R_{\overset{\lor}{g}} \geq n(n- 1)k^2$\ $($ for some $k > 0$ $)$.
Then, there exists a positive constant $C_n > 0$ depending only on $n$ such that
$$
{\rm diam}(M, \overset{\lor}{g}) \leq \frac{C_n}{k}.
$$
\end{thm}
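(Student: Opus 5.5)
Normalize by scaling so that $k=1$, so $R:=R_{\overset{\lor}{g}}\ge n(n-1)$. Write $g=\overset{\lor}{g}$. Since $g$ is a Yamabe metric, it minimizes the Yamabe functional in its conformal class. For every $0<u\in C^\infty(M)$,
\[
Y(M,[g])\le \frac{\int_M\big(a_n|\nabla u|^2+R\,u^2\big)\,dv_g}{\big(\int_M u^{2^*}dv_g\big)^{2/2^*}},\qquad a_n=\frac{4(n-1)}{n-2},\quad 2^*=\frac{2n}{n-2}.
\]
Equality holds at $u\equiv 1$, where the quotient equals $R\,V^{2/n}$ with $V=\operatorname{Vol}(M,g)$. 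We may take $R$ constant, as is standard for Yamabe minimizers. We also have the Bishop-type bound $R\,V^{2/n}\le n(n-1)\operatorname{Vol}(S^n)^{2/n}$, which gives $V\le \operatorname{Vol}(S^n)$.

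The plan is to test the Yamabe quotient with a cutoff function built from the distance to a point. Suppose $D=\operatorname{diam}(M,g)$ is large. Pick $p,q$ with $d(p,q)=D$ and set $r=d(p,\cdot)$. The level regions $A_j=\{jL\le r<(j+1)L\}$, for a fixed length $L$ to be chosen, give about $D/L$ disjoint annuli, each nonempty. Their volumes $V_j$ sum to $V\le\operatorname{Vol}(S^n)$, so many annuli must have small volume. More precisely, for a long string of consecutive annuli we can find two adjacent ones of volume at most about $\operatorname{Vol}(S^n)L/D$.

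Now take $u=\phi(r)$ equal to $1$ on $\{r\le jL\}$, equal to $\epsilon$ on $\{r\ge (j+1)L\}$, and linear in between. Then $|\nabla u|\le 1/L$, and the gradient term is supported on $A_j$, so it is bounded by $a_nV_j/L^2$. Let $V_-=\operatorname{Vol}\{r<jL\}$ and $V_+=V-V_--V_j$. Letting $\epsilon\to0$, the Yamabe inequality $R V^{2/n}\le Q(u)$ gives approximately
\[
R\,V^{2/n}\le \frac{R(V_-+V_j)+a_nV_j/L^2}{V_-^{(n-2)/n}}.
\]
Rearranging, and using $(V_-+V_j)/V_-^{(n-2)/n}\le V_-^{2/n}(1+V_j/V_-)$, we get
\[
V^{2/n}-V_-^{2/n}\lesssim V_j\,(1+1/L^2)\,V_-^{-(n-2)/n}.
\]
The left side is bounded below by a positive multiple of $V_+/V$ times $V^{2/n}$. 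So if we choose $j$ so that both $V_-$ and $V_+$ are a definite fraction of $V$, this forces $V_j\gtrsim V$, a contradiction once $D$ is large.

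We do not need exactly balanced halves. We choose the cut among the many thin annuli located in the range where the cumulative volume is between $V/4$ and $3V/4$. If that range spans only a few annuli, we use the two-sided version: a cutoff near $p$ and a cutoff near $q$ give the same estimate for the outer pieces.

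The main obstacle is the lack of any lower volume bound. The whole volume might be concentrated in a short stretch of $r$-values, with long thin tentacles carrying negligible volume, and then the balanced-cut argument gives nothing. To handle this I would apply the argument to the far portion of the manifold. Take a thin annulus at distance about $D/2$ and suppose the outer region has volume $V_+\ll V$. Then the Yamabe inequality with $u$ supported on the outer region reads
\[
R V^{2/n}\le \big(RV_+ + a_nV_j/L^2\big)V_+^{-(n-2)/n}\approx R\,V_+^{2/n},
\]
which contradicts $V_+<V$ unless the gradient term dominates. That requires $V_j/L^2\gtrsim V_+^{(n-2)/n}V^{2/n}$. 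Iterating over the $\sim D/L$ annuli beyond the cut, the volumes $V_+$ must then decay at most geometrically from annulus to annulus. At the same time the $V_j$ sum to at most $V_+$, so the remaining region must exhaust its volume within a bounded number of steps, and this bounds $D$ by some $C_n$.

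Making this iteration quantitative, with constants depending only on $n$, is the step I expect to be hardest. Finally, rescaling back from $k=1$ gives $\operatorname{diam}\le C_n/k$.
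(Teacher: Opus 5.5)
\textbf{Verdict: there is a genuine gap.} It is the tentacle step you defer, and as sketched it cannot be made quantitative. Write $\mu:=V^{-1}\mu_g$ and $w_j:=\mu\{r\ge jL\}$. Put your outer cutoff into the normalized Yamabe--Sobolev inequality $\|u\|_{L^{2n/(n-2)}(\mu)}^2\le \frac{\alpha_n}{R}\|du\|_{L^2(\mu)}^2+\|u\|_{L^2(\mu)}^2$ (here $\alpha_n$ is your $a_n$). You get $R\,w_{j+1}^{(n-2)/n}\le (R+\alpha_nL^{-2})\,w_j$, that is, $w_{j+1}\le (w_j/\theta)^{n/(n-2)}$ with $\theta=R/(R+\alpha_nL^{-2})$.

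This is an \emph{upper} bound on the next tail volume. It forces the tail to decay fast (doubly exponentially once $w_j$ is small). It does not say the decay is ``at most geometric''. A sequence like $w_j=\exp(-(n/(n-2))^j)$ satisfies it for as many steps as you like. So nothing forces the far region to exhaust its volume in a bounded number of steps.

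Read backwards from the far end, the recursion gives only $w_{J-N}\ge\theta^{n/2}\,w_J^{((n-2)/n)^N}$. This is useless without a uniform lower bound on the tip volume $w_J$, and nothing in your argument supplies one: annuli of fixed width around $p$ cannot see the neighbourhood of $q$. Likewise, your balanced-cut step (which is correct) only bounds the length of the stretch carrying a definite fraction of the volume. Turning that into a diameter bound needs the same missing lower bound near $p$ and $q$.

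\textbf{The missing idea} is noncollapsing obtained at \emph{shrinking} scales, where the non-uniform local Euclidean behaviour gets washed out. Test the same inequality with the cutoff of $B(x,\rho/2)\subset B(x,\rho)$ (slope $2/\rho$). For $m(\rho):=\mu(B(x,\rho))$ and $R\rho^2\le 1$ this gives $m(\rho/2)^{(n-2)/n}\le(1+4\alpha_n)(R\rho^2)^{-1}m(\rho)$. Hence $\psi(\rho):=m(\rho)(\sqrt{R}\rho)^{-n}$ satisfies $\psi(\rho)\ge c\,\psi(\rho/2)^{(n-2)/n}$ with $c=2^{2-n}(1+4\alpha_n)^{-1}$.

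Iterate dyadically. The quantity $\psi(\rho/2^i)$ tends to $\omega_n Y(M,[g])^{-n/2}$, where $\omega_n$ is the volume of the Euclidean unit ball. This limit is positive but has no uniform control; however, it enters raised to the power $((n-2)/n)^i\to 0$. Hence $\psi(\rho)\ge c^{n/2}$, so every ball of radius $R^{-1/2}$ has $\mu$-measure at least $c^{n/2}$.

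Now pack $\lfloor D\sqrt{R}/2\rfloor+1$ disjoint such balls along a minimizing geodesic from $p$ to $q$. This gives $D\le 2c^{-n/2}R^{-1/2}$. No annuli are needed, and not even Aubin's volume bound. With this lemma your balanced-cut argument would also close.

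For comparison, the paper obtains the statement with the sharp value $C_n=\pi$ as an immediate corollary of Theorem~\ref{thm:main}. It feeds the same normalized Sobolev inequality, with $A=\alpha_n/R$, into the Bakry--Ledoux diameter theorem, which gives ${\rm diam}\le\pi\sqrt{n(n-1)/R}$. Your elementary route, once repaired, yields only a non-sharp $C_n$.
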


Our main result of this paper is the following diameter estimate, which is the optimal improvement of the above:

\begin{thm} \label{thm:main}
Let ${\overset{\lor}{g}}$ be a positive Yamabe metric on a connected, closed $n$-manifold $M\,(n \geq 3)$.
Assume that $R_{\overset{\lor}{g}} \geq n(n - 1)k^2$\ $($ for some $k > 0$ $)$.
Then,
$$
{\rm diam}(M, {\overset{\lor}{g}}) \leq \frac{\pi}{k}.
$$
Moreover, if ${\rm diam}(M, {\overset{\lor}{g}}) = \frac{\pi}{k}$,
then $(M, {\overset{\lor}{g}})$ is isometric to the round $n$-sphere $S^n(\frac{1}{k})$.
\end{thm}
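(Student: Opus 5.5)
The plan is to use the variational characterization of $\overset{\lor}{g}$ through test functions of a single distance function. This reduces everything to a one–dimensional inequality on $[0,D]$, where $D={\rm diam}(M,\overset{\lor}{g})$, whose sharp form is attained exactly by the round sphere.

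\emph{Step 1: reduction.} After scaling, assume $k=1$. Since the scalar curvature of a Yamabe metric is constant, we have $R:=R_{\overset{\lor}{g}}\geq n(n-1)$. The Yamabe minimizing property gives, for every $\varphi\in W^{1,2}(M)$,
\begin{equation}\notag
\int_M \Big(\tfrac{4(n-1)}{n-2}|\nabla\varphi|^2+R\varphi^2\Big)\,dv \;\geq\; R\,{\rm Vol}(M)^{2/n}\,\|\varphi\|_{L^{2n/(n-2)}}^2 .
\end{equation}
Its second–variation consequence is $\lambda_1(-\Delta)\geq R/(n-1)\geq n$, together with the volume bound recalled above. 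Fix $p,q$ with $d(p,q)=D$ and set $r=d(p,\cdot)$. For $\varphi=f\circ r$, the coarea formula turns both sides into integrals over $[0,D]$ against the area function $A(t)=|\{r=t\}|$, using $|\nabla r|=1$ a.e. This yields a family of one–dimensional inequalities involving the unknown measure $A(t)\,dt$, with total mass ${\rm Vol}(M)$.

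\emph{Step 2: contradiction for $D>\pi$.} Assume $D>\pi$. I will choose test functions adapted to the model sphere: first $f(t)=\cos(t\cdot \pi/D)$-type functions, and second a family of truncated conformal factors of spherical bubbles centered at $p$ and at $q$. The two conditions to play against each other are the following.
\begin{itemize}
\item[(a)] The $\lambda_1$ inequality with $f=\cos(\pi t/D)$, after adjusting the mean, forces $\int (\pi/D)^2\sin^2\geq n\int\cos^2$ weighted by $A$. This requires the mass of $A$ to concentrate near the ends.
\item[(b)] The full nonlinear inequality with bubble functions penalizes concentration near either end. The penalty comes from the volume bound $R\,{\rm Vol}^{2/n}\leq n(n-1){\rm Vol}(S^n)^{2/n}$ combined with the Sobolev–type lower bound.
\end{itemize}
Optimizing over the free parameters should show that no nonnegative measure $A(t)dt$ on $[0,D]$ satisfies all these constraints once $D>\pi$. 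The extremal configuration is $A(t)=c\sin^{n-1}t$ on $[0,\pi]$.

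\emph{Step 3: rigidity.} If $D=\pi$, every inequality used must be an equality. In particular, the test function $\cos r$ is then a first eigenfunction with $\lambda_1=n$ and $R=n(n-1)$. Tracing equality through the Bochner/Obata argument should give $\nabla^2\cos r=-\cos r\,g$, and Obata's theorem then identifies $(M,\overset{\lor}{g})$ with $S^n(1)$. Alternatively, equality in the volume bound already forces $\overset{\lor}{g}$ to be conformally round with maximal Yamabe constant, and hence round.

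The main obstacle is Step 2: producing a one–dimensional obstruction that is genuinely sharp at $\pi$. A distance-only test function loses information, since the level sets of $r$ can have arbitrary geometry. I would first try to close the argument purely with the $\lambda_1$ inequality plus the volume inequality, optimizing $f$ in a weighted Sturm–Liouville problem. If that is not sharp, the fallback is to use a warped $\mu$-bubble between $p$ and $q$ built from the positive first eigenfunction of the conformal Laplacian. This would give a warped-product comparison with $dt^2+\sin^2 t\,g_{S^{n-1}}$ that bounds the distance by $\pi$ directly.
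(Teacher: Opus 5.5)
\textbf{Gap 1: Step 2 carries the whole diameter bound and is only asserted.} You write that optimizing over the free parameters ``should show'' no admissible $A(t)$ exists for $D>\pi$, but no such argument is given.

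The first combination you propose cannot work. On functions of $r$, the bound $\lambda_1\ge n$ is just a Poincar\'e inequality for the probability measure $\nu=V^{-1}A(t)\,dt$ on $[0,D]$. Poincar\'e inequalities do not bound the length of the support: the Gaussian $e^{-nt^2/2}dt$ restricted to any interval has spectral gap at least $n$. The volume bound only caps the total mass $V$, which does not constrain $D$ at all. Your heuristic (a) is also reversed: Poincar\'e applied to $\cos(\pi t/D)$ pushes the mass of $\nu$ away from the ends, not toward them.

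The missing idea is the Bakry--Ledoux theorem (Theorem 2.1). The Yamabe property gives the Sobolev inequality (1) for $V^{-1}d\mu$ with $A=\alpha_n/R$. That inequality alone forces ${\rm diam}\le\pi\sqrt{n(n-2)A/4}=\pi\sqrt{n(n-1)/R}\le\pi$. Your reduction to functions of a single $1$-Lipschitz function is compatible with this. The sharp constant, however, comes from testing the full nonlinear inequality on the family $u_\lambda=(1+\lambda\sin f)^{-(n-2)/2}$, $|\lambda|<1$, and analyzing the resulting inequalities as $\lambda\to1$. For $f=r-\pi/2$ these are $(1-\lambda\cos r)^{-(n-2)/2}$, the untruncated round bubbles centered at $p$. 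Nothing in your sketch shows how truncated bubbles plus $\lambda_1$ plus the volume bound would produce the constant $\pi$.

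\textbf{Gap 2: the rigidity argument in Step 3.} First, ``every inequality used must be an equality'' presupposes an extremal function. A contradiction argument for $D>\pi$ does not supply one at $D=\pi$. The paper obtains it from Bakry--Ledoux's equality theorem (Theorem 2.2), which yields a nonconstant $X$ with $\Delta X=-nX$ and $|dX|^2=1-X^2$.

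Second, even granting this $X$, the Bochner/Obata route is unavailable. The Bochner proof of Lichnerowicz--Obata needs ${\rm Ric}\ge(n-1)g$. The Yamabe bound $\lambda_1\ge R/(n-1)$ instead comes from the second variation, and its equality case carries no Hessian information. Writing $X=\cos t$, the two equations only give $|dt|=1$ and mean curvature $(n-1)\cot t$ for the level sets. Umbilicity, and hence $\nabla^2X=-Xg$, would require ${\rm Ric}(\nabla t,\nabla t)\ge n-1$.

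Your alternative route is the one the paper takes, but its input ${\rm Vol}(M,\overset{\lor}{g})=\sigma_n$ must be proved, not read off from ``equality''. The paper proceeds as follows:
\begin{enumerate}
\item It shows $\nabla^2X=\mp I$ at the critical points.
\item It uses Morse theory to get exactly one maximum and one minimum.
\item It integrates $\Delta t=(n-1)\cot t$ over slabs to get $A(\tau)=\sigma_{n-1}\sin^{n-1}\tau$, hence $V=\sigma_n$.
\item This gives $Y(M,[g])=Y(S^n,[g_{{}_{S^n}}])$, and only then do the equality case of Aubin's inequality and Obata's theorem give the isometry.
\end{enumerate}
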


It should be remarked the following.
For each conformal class $C$ on a given closed manifold,
a Yamabe metric in $C$ always exists and has constant scalar curvature.
Hence, it is a suitable representative of $C$.

The remaining sections are organized as follows.
Section\,$2$ contains some necessary definitions and preliminary geometric results on Yamabe constants and Yamabe metrics.
It then is devoted to the proof of Theorem\,1.3.
In Sections\,$3$ and $4$, we will give similar diameter estimates
for relative Yamabe metrics and almost conical Yamabe metrics.

\section{Preliminaries and the proof of main result}

We first review the definitions of Yamabe constants and Yamabe metrics.
Let $M^n$ be a connected, closed $n$-manifold ($n \geq 3$).
It is well known that a Riemannian metric on $M$ is {\it Einstein} if and only if it is a critical point of
the {\it normalized Einstein-Hilbert functional} $E$ on the space $\mathcal{M}(M)$ of all Riemannian metrics on $M$
$$
E : \mathcal{M}(M) \rightarrow \mathbb{R},\quad g \mapsto E(g) := \frac{\int_MR_gd\mu_g}{{\rm Vol}(M, g)^{(n-2)/n}},
$$
where $R_g, d\mu_g$ and ${\rm Vol}(M, g)$ denote respectively the scalar curvature of $g$, the volume measure of $g$
and the volume of $(M, g)$.
The restriction $E|_{[g]}$ of $E$ to any conformal class $[g] := \{ e^{2f}\,g\ |\ f \in C^{\infty}(M) \}$ is always bounded from below.
Hence, we can consider the following conformal invariant
$$
Y(M, [g]) := \inf_{\widetilde{g} \in [g]}E(\widetilde{g}),
$$
which is called the {\it Yamabe constant} of $(M, [g])$.
A remarkable theorem of Yamabe, Trudinger, Aubin and Schoen asserts that
each conformal class $[g]$ always contains a metric $\overset{\lor}{g}$, called a {\it Yambe metric},
which realizes the minimum (cf.\,\cite{LP}, \cite{Sc-1}, \cite{Au-Book})
$$
Y(M, [g]) = E(\overset{\lor}{g}).
$$
This metric must have constant scalar curvature
$$
R_{\overset{\lor}{g}} = Y(M, [g])\cdot V_{\overset{\lor}{g}}^{-2/n},
$$
where $V_{\overset{\lor}{g}} = {\rm Vol}(M, \overset{\lor}{g})$.
Aubin\,\cite{Au} proved that
$$
Y(M^n, C) \leq Y(S^n, [g_{{}_{S^n}}]) = n(n-1) V_{g_{{}_{S^n}}}^{2/n}
$$
for any conformal class $C$ on $M$. 

The Yamabe constant $Y(M, [g])$ is rewritten as
$$
Y(M, [g]) = \inf_{\substack{u \in W^{1, 2}(M) \\ u \not\equiv 0}}\frac{\alpha_n\int_M|du|^2d\mu_g + \int_MR_gu^2d\mu_g}
{\Big{(}\int_M|u|^{2n/(n-2)}d\mu_g\Big{)}^{(n-2)/n}},
$$
where $W^{1, 2}(M)$ denotes the Sobolev space of functions with $L^2$ first weak derivatives and \\
$\alpha_n := \frac{4(n - 1)}{n - 2} > 0$.
If $Y(M, [g]) > 0$, the above and $R_{\overset{\lor}{g}} = Y(M, [g])\cdot V_{\overset{\lor}{g}}^{-2/n}$ imply
$$
\Big{(}\int_M|u|^{2n/(n-2)}d\mu_{\overset{\lor}{g}}\Big{)}^{(n-2)/n} \leq
\frac{\alpha_n}{Y(M, [g])}\int_M|du|^2d\mu_{\overset{\lor}{g}}
+ \frac{1}{V_{\overset{\lor}{g}}^{2/n}}\int_Mu^2d\mu_{\overset{\lor}{g}}\quad {\rm for}\ \ \forall u \in W^{1,2}(M).
$$
Moreover, mlutiplying the above by $\frac{1}{V_{\overset{\lor}{g}}^{(n-2)/n}}$, we also get
\begin{equation}
\Big{(}\int_M|u|^{2n/(n-2)}\frac{1}{V_{\overset{\lor}{g}}}d\mu_{\overset{\lor}{g}}\Big{)}^{(n-2)/n} \leq
\frac{\alpha_n}{R_{\overset{\lor}{g}}}\int_M|du|^2\frac{1}{V_{\overset{\lor}{g}}}d\mu_{\overset{\lor}{g}}
+ \int_Mu^2\frac{1}{V_{\overset{\lor}{g}}}d\mu_{\overset{\lor}{g}}
\end{equation}
for any $u \in W^{1,2}(M)$.
Here, we notice that $\frac{1}{V_{\overset{\lor}{g}}}d\mu_{\overset{\lor}{g}}$ is a probability measure on $M$,
that is, $\frac{1}{V_{\overset{\lor}{g}}}\mu_{\overset{\lor}{g}}(M) = 1$.
The diameter ${\rm diam}(M, {\overset{\lor}{g}})$ of $(M, {\overset{\lor}{g}})$ is also rewritten as
$$
{\rm diam}(M, {\overset{\lor}{g}}) = \sup\{ ||\widetilde{f}||_{L^{\infty}}\ \big{|}\ f \in W^{1, 2}(M),\ ||df||_{L^{\infty}} \leq 1 \},
$$
where $\widetilde{f}(x, y) := f(x) - f(y)$ for $x, y \in M$.
Remark that, since $(M, g)$ is connected and closed, there always exists $f \in W^{1,2}(M)$
such that $||df||_{\infty} \leq 1$ and $||\widetilde{f}||_{\infty} = {\rm diam}(M, g)$. \\

By replacing $\overset{\lor}{g}$ by $k^2\overset{\lor}{g}$,
it is enough to prove Theorem\,1.3 when $k=1$.
For the proof of the diameter estimate in Theorem\,1.3, we first prepare following result:

\begin{thm}$(${\rm Bakry--Ledoux}\,\cite[Theorem\,2]{BL}$).$
Let $(M, g)$ be a connected, closed Riemannian $n$-manifold $(n \geq 3)$.
Assume that, for some $A > 0$,
\begin{equation}
||f||^2_{2n/(n-2)} \leq A||df||^2_2 + ||f||^2_2\quad {\rm for}\ \ f \in W^{1,2}(M).
\end{equation}
Then,
$$
{\rm diam}(M, g) \leq \pi\sqrt{\frac{n(n-2)A}{4}}.
$$
Here all the $L^q$-norms below are taken with respect to the
probability measure $V_g^{-1}d\mu_g$. 
\end{thm}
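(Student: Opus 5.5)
The plan is to follow the strategy of Bakry and Ledoux. The idea is to feed into the Sobolev inequality (2.2) the one-parameter family of functions that are extremal on the round sphere, composed with a $1$-Lipschitz function. This should yield a differential inequality in the parameter whose comparison solution blows up exactly when the oscillation of the function exceeds $\pi\rho$, where
$$
\rho := \sqrt{\tfrac{n(n-2)A}{4}}.
$$
As a normalization check, on the unit sphere the sharp Sobolev inequality in the probability-measure form (2.2) holds with $A = \frac{4}{n(n-2)}$, so that $\rho = 1$ and the bound gives ${\rm diam} = \pi$. Throughout I write $p = \frac{2n}{n-2}$.

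Fix $f \in W^{1,2}(M)$ with $\|df\|_\infty \le 1$ and suppose, for contradiction, that ${\rm osc}\, f > \pi\rho$. After translating $f$, I would set $\theta := f/\rho$, a $\rho^{-1}$-Lipschitz function. On the sphere the extremals of the Sobolev inequality are
$$
u_r = (1 - r\cos d(x_0,\cdot))^{-(n-2)/2}, \qquad 0 \le r < 1.
$$
Mimicking these, I would test (2.2) with
$$
u_r = \bigl(1 - r\cos\theta\bigr)^{-(n-2)/2},
$$
or, to handle functions of large oscillation, with a truncated version of $\cos\theta$ in which $\theta$ is clamped to $[0,\pi]$. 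Then $|du_r|^2 \le \rho^{-2}\,\bigl|\partial_\theta u_r\bigr|^2$ pointwise. Substituting, every term in (2.2) becomes an integral against the push-forward $\nu$ of the probability measure $V_g^{-1}d\mu_g$ under $\theta$, of an explicit function of $\cos\theta$ and $r$.

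The key step is to organize these integrals as a function
$$
F(r) := \int (1 - r\cos\theta)^{-(n-1)}\,d\nu
$$
and its derivatives. Here one uses $\partial_r(1 - r\cos\theta)^{-m} = m\cos\theta\,(1 - r\cos\theta)^{-m-1}$, together with the identity $\sin^2 = 1 - \cos^2$ to rewrite the gradient term. With the normalization $A\rho^{-2} = \frac{4}{n(n-2)}$, the constants should combine so that (2.2) becomes a first-order differential inequality for $F$ (or for a suitable power of it) in $r$. On the round sphere with $\theta = d(x_0,\cdot)$ this inequality is an identity, since there $u_r$ is an extremal. The comparison function is then the sphere's $F_0(r)$, which stays finite for all $r < 1$.

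Finally, I would integrate the differential inequality from $r = 0$, where $F(0) = 1$ matches the sphere, to conclude that $F(r) \le F_0(r)$ for all $r < 1$. This is where the contradiction comes from. If $\theta$ attains values near $0$ on a set of positive measure, and the oscillation exceeding $\pi$ lets us place the minimum of $\theta$ there, then $(1 - r\cos\theta)^{-(n-1)}$ blows up too fast as $r \to 1$. More precisely, I would localize with a Lipschitz cutoff to show that the $\nu$-mass near $\theta = 0$ behaves like that of a measure with more than an $(n-1)$-dimensional sphere's worth of room, forcing $F(r) > F_0(r)$ for $r$ close to $1$. I expect the main obstacle to be this last step: arranging the truncation and the choice of basepoint so that an oscillation strictly larger than $\pi\rho$ really produces a violation of the comparison. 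It may be cleaner to work on the other side, showing that $\nu$ must be supported in an interval of length $\le \pi$ by letting $r \to 1$ and analyzing both endpoints symmetrically using $\pm\cos$. Once this is done, taking the supremum over admissible $f$ gives ${\rm diam}(M,g) \le \pi\rho$.
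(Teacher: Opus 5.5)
The paper gives no proof of this statement; it cites Bakry--Ledoux. Their setup is recalled in the proof of Claim~2: translate $f$ so that $X=\sin f$ satisfies $\int X\,d\mu=0$, then test the Sobolev inequality on $u_\lambda=(1+\lambda X)^{-(n-2)/2}$ for all $|\lambda|<1$. Your outline follows this strategy, but the two steps that carry the argument are missing or wrong.

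\textbf{The comparison step.} The derivative of $F$ enters only through $\int(1-r\cos\theta)^{-n}\,d\nu=F+\frac{r}{n-1}F'$. So the inequality you obtain has the form $rF'\le(n-1)(\Phi-F)$, with $\Phi$ built from the other terms, and it degenerates at $r=0$. The value $F(0)=1$ therefore does not single out a comparison solution: the slope $F'(0)=(n-1)\int\cos\theta\,d\nu$ is free data.

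Without a normalization, the conclusion $F\le F_0$ is false. On the round sphere itself ($\rho=1$), the function $\theta=\max\{0,{\rm dist}(x_0,\cdot)-\varepsilon\}$ is admissible. Yet $F(r)\ge\nu(B(x_0,\varepsilon))(1-r)^{-(n-1)}$, whereas $F_0(r)\sim c(1-r)^{-(n-2)/2}$.

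You need the centering $\int\cos\theta\,d\nu=0$, which is the analogue of BL's $\int\sin f\,d\mu=0$. This uses up the translation freedom, so you cannot also place the minimum of $\theta$ at $0$. In fact, piling mass near $\theta=0$ is exactly what makes $F'(0)>0$ and breaks the comparison.

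There is a further problem: the inequality does not close in $F$ and $F'$. The term $\int(1-r\cos\theta)^{-(n-2)}\,d\nu$ coming from $\|u_r\|_2^2$ remains. Bounding it by H\"older is strict on the sphere, so $F_0$ would no longer be the extremal solution. You can keep it exactly through the relation $\bigl(r^{n-2}\int(1-r\cos\theta)^{-(n-2)}d\nu\bigr)'=(n-2)r^{n-3}F$.

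\textbf{The source of the contradiction.} Blow-up at a minimum point of $\theta$ cannot beat the sphere. For $f={\rm dist}(x_0,\cdot)$, the case that matters, $\nu\{\theta<\delta\}\approx c\,\delta^n$. This produces exactly the rate $(1-r)^{-(n-2)/2}$ of $F_0$, and no cutoff creates ``more room''.

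What ${\rm osc}\,f>\pi\rho$ actually gives, once $\theta$ is centered, is that the value $\cos\theta=\pm1$ is attained in the interior of the range.

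In BL's unclamped form, an interval of length $>\pi$ contains an odd multiple of $\pi/2$ in its interior. So $\sin f=\pm1$ on a level set separating two nonempty open sets. The $\delta$-neighbourhood of that level set has measure $\gg\delta^n$ (e.g.\ $\gtrsim\delta$ by coarea and the isoperimetric inequality), which forces faster blow-up than $F_0$.

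Your clamped form can also be made to work, but only if the window is chosen for centering. By the intermediate value theorem, pick $s$ so that $\bar\theta=\min\{\max\{\theta-s,0\},\pi\}$ satisfies $\int\cos\bar\theta\,d\nu=0$. Then ${\rm osc}\,\theta>\pi$ forces $\{\theta<s\}$ or $\{\theta>s+\pi\}$ to be a nonempty open set $U$ on which $\cos\bar\theta=\pm1$. Hence $F(\pm r)\ge\nu(U)(1-r)^{-(n-1)}$, which contradicts the centered comparison.

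In both versions you cannot control which sign occurs. So the comparison must be proved for all $r\in(-1,1)$, not only on $[0,1)$, and centering is what makes this possible on both sides.
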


\noindent\begin{proof}[Proof of ${\rm diam}(M, \overset{\lor}{g}) \leq \pi$.]
From the inequality\,$(1)$, we can take $A = \frac{\alpha_n}{R_{\overset{\lor}{g}}} > 0$ in the inequality\,$(2)$ of Theorem\,2.1.
It immediately follows from $R_{\overset{\lor}{g}} = Y(M, [g])\cdot V_{\overset{\lor}{g}}^{-2/n}$ and $R_{\overset{\lor}{g}} \geq n(n-1)$ that
$$
{\rm diam}(M, {\overset{\lor}{g}}) \leq \pi\sqrt{\frac{n(n-1)}{R_{\overset{\lor}{g}}}} \leq \pi.
$$
\end{proof}

For the proof of the equality case in Theorem\,1.3, we also prepare the following result:

\begin{thm}$(${\rm Bakry--Ledoux}\,\cite[Theorem\,4]{BL}$).$
Let $(M, g)$ be a connected, closed Riemannian $n$-manifold $(n \geq 3)$ satisfying the Sobolev inequality $(2)$.
Assume that there exists a function $f \in W^{1,2}(M)$ such that $||df||_{\infty} \leq 1$ and $||\widetilde{f}||_{\infty} = \pi$.
Then, there exists a nonconstant function $u \in W^{1,2}(M)$ such that the equality holds in $(2)$ as below $:$
\begin{equation}
||u||^2_{2n/(n-2)} = A||du||^2_2 + ||u||^2_2.
\end{equation}
Moreover, if we set $X := \sin(f)$, for every $\varphi \in C^{\infty}(\mathbb{R})$,
$$
\Delta_g\varphi(X) = (1 - X^2)\varphi''(X) - nX\varphi'(X).
$$
\end{thm}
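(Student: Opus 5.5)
The plan is to follow the Bakry--Ledoux comparison scheme. Throughout I take $A=\frac{4}{n(n-2)}$, the value for which Theorem 2.1 gives exactly ${\rm diam}\le\pi$ (after the scaling $k=1$). I test $(2)$ against the one-parameter family of functions built from the extremals of the round sphere, composed with $f$. Centring $f$ so that its range is $[-\pi/2,\pi/2]$, I set $X=\sin f$, so $|dX|^2=\cos^2 f\,|df|^2\le 1-X^2$ a.e. For $\lambda\in(-1,1)$ I put
$$u_\lambda:=(1+\lambda X)^{-(n-2)/2}.$$
On $S^n$ with $X$ a first spherical harmonic, these are precisely the Yamabe extremals.

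\textbf{Step 1: differential inequality.} Insert $u_\lambda$ into $(2)$ and use $|dX|^2\le 1-X^2$. This turns $(2)$ into an inequality among the moment functions $H_p(\lambda):=\int_M(1+\lambda X)^{-p}$ for $p=n-2,\,n-1,\,n$. Expanding $\lambda^2X^2=(1+\lambda X)^2-2(1+\lambda X)+1$ in the gradient term, these moments become related through $\lambda$-derivatives, e.g. $H_{p+1}=H_p+\frac{\lambda}{p}H_p'$. One obtains a closed differential inequality for the single function $H:=H_{n-2}$ (or for $H^{-2/(n-2)}$). This inequality is an equality for the model measure $\nu_n=c_n(1-x^2)^{(n-2)/2}dx$ on $[-1,1]$, i.e. the law of $\sin$ of the latitude on $S^n$.

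\textbf{Step 2: comparison and the rigidity hypothesis.} A Riccati/maximum-principle comparison of $H$ with the model function $H^{\nu_n}$ gives $H\le H^{\nu_n}$, or the reverse, on $(-1,1)$. The diameter bound of Theorem 2.1 comes from letting $\lambda\to\pm1$, where $H^{\nu_n}$ blows up only at the rate allowed when $X$ reaches $\pm1$. The hypothesis $\|\widetilde f\|_\infty=\pi$ says that $X$ attains both values $\pm1$. I would then show that this forces $H$ to blow up at the same rate as $H^{\nu_n}$ as $\lambda\to\pm1$. By uniqueness for the ODE, $H\equiv H^{\nu_n}$, and hence every inequality used in Step 1 is an equality. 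In particular:
\begin{itemize}
\item $|dX|^2=1-X^2$ a.e.;
\item the law of $X$ is $\nu_n$;
\item $(3)$ holds for each $u=u_\lambda$ with $\lambda\ne0$, which is nonconstant since $X$ is. This gives the first claim.
\end{itemize}
This step is the main obstacle: extracting exact equality in the ODE from the purely metric information at the endpoints. My first attempt would be to work with $\phi(\lambda)=H(\lambda)^{-2/(n-2)}$, expand near $\lambda=\pm1$, and use that the measure of the sublevel sets $\{X>1-\varepsilon\}$ is positive for every $\varepsilon$.

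\textbf{Step 3: Laplacian identity.} Since $u_\lambda$ attains equality in $(2)$, it minimizes the Sobolev quotient. Its Euler--Lagrange equation is
$$-A\,\Delta u_\lambda+u_\lambda=c_\lambda\,u_\lambda^{(n+2)/(n-2)}.$$
Expand this in $\lambda$ using the chain rule $\Delta u_\lambda=u_\lambda'(X)\Delta X+u_\lambda''(X)|dX|^2$ together with $|dX|^2=1-X^2$. At first order in $\lambda$ this yields $\Delta X=-nX$ weakly. Then for any $\varphi\in C^\infty(\mathbb R)$,
$$\Delta\varphi(X)=\varphi''(X)|dX|^2+\varphi'(X)\Delta X=(1-X^2)\varphi''(X)-nX\varphi'(X),$$
which is the second claim.
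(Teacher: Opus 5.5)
\textbf{There is a genuine gap: Step~2 is announced but not carried out, and it is where the entire content of the statement lies.} The paper does not prove this theorem; it imports it from Bakry--Ledoux. A self-contained proof therefore has to supply exactly this rigidity mechanism. Your Step~1 set-up is sound: the moment identities $H_{p+1}=H_p+\frac{\lambda}{p}H_p'$ hold, and the model measure $\nu_n$ gives equality. But without Step~2, none of the conclusions is established: not the nonconstant extremal for $(3)$, not $|dX|^2=1-X^2$, and not the law of $X$.

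The route you sketch fails as stated, for two reasons.

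First, $H$ satisfies only a differential \emph{inequality}, so ``uniqueness for the ODE'' gives nothing. You would need a strict comparison, that is, a monotone quantity that turns any slack in Step~1 into a quantified deficit of $H$ as $\lambda\to\pm1$. You have not identified one, and the inequality does not suit a naive maximum principle:
\begin{itemize}
\item Since $H_n=H+\frac{2\lambda}{n-2}H'+\frac{\lambda^2}{(n-1)(n-2)}H''$, the inequality is second order and degenerate at $\lambda=0$.
\item With $f$ centred by its range, $H'(0)=-(n-2)\int X\,d\mu$ need not vanish. So $H$ and $H^{\nu_n}$ need not even share initial data.
\end{itemize}

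Second, ``blowing up at the same rate'' cannot carry the rigidity.
\begin{itemize}
\item For $n=3$, $H^{\nu_3}(\lambda)=\frac{2}{\lambda^2}\bigl(1-\sqrt{1-\lambda^2}\bigr)$, which stays bounded on $[-1,1]$. For $n=4$ the divergence is only logarithmic.
\item More fundamentally, slack in $|dX|^2\le 1-X^2$ where $X$ stays away from $\pm1$ enters with weight $(1+\lambda X)^{-n}$, which remains bounded as $\lambda\to\pm1$. Such slack is invisible at the level of blow-up exponents and can at best show up in leading constants.
\end{itemize}
You would therefore have to match leading constants. The hypothesis $\|\widetilde f\|_\infty=\pi$ only tells you that $X$ reaches $\pm1$. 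Turning that into a \emph{sharp} lower bound on $\mu(\{X<-1+\varepsilon\})$ is a further ingredient. In the Riemannian case it could come from three facts: $\{X<-1+\varepsilon\}\supset B(x_0,\arccos(1-\varepsilon))$ at a minimum point $x_0$ of $f$; small-ball volume asymptotics; and $V_g\le\sigma_n$, which follows from $(2)$ with $A=\frac{4}{n(n-2)}$ and the sharp Euclidean Sobolev constant. Even then you still need the strict comparison above. Expanding $H^{-2/(n-2)}$ near $\lambda=\pm1$ supplies none of this.

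\textbf{Your Step~3 is fine in substance.} It is the one part of this statement the paper re-derives itself, in the proof of Claim~2 in Section~3, by differentiating the weak Euler--Lagrange identity for $u_\lambda$ at $\lambda=0$. Two corrections are needed.
\begin{itemize}
\item The first-order expansion actually gives $-\Delta X=n\bigl(X-\int X\,d\mu\bigr)$, so you need $\int X\,d\mu=0$. In your scheme this again comes only from Step~2, via the symmetry of $\nu_n$.
\item Since $X$ is a priori only Lipschitz, differentiate the weak form, as the paper does, rather than using the pointwise chain rule for $\Delta u_\lambda$.
\end{itemize}
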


\begin{proof}[Proof of the equality case $: {\rm diam}(M, {\overset{\lor}{g}}) = \pi$.]
The estimate already proved gives
\[
 \pi={\rm diam}(M, \overset{\lor}{g})
 \leq \pi\sqrt{\frac{n(n-1)}{R_{\overset{\lor}{g}}}}
 \leq \pi .
\]
Consequently,
\[
 R_{\overset{\lor}{g}}=n(n-1),
 \qquad
 \frac{\alpha_n}{R_{\overset{\lor}{g}}}=\frac{4}{n(n-2)}.
\]
Theorem\,2.2 therefore gives a nonconstant function
$X\colon M\longrightarrow[-1,1]$ such that, for every
$\varphi\in C^\infty(\mathbb R)$,
\[
 \Delta_{\overset{\lor}{g}}\varphi(X)
 =(1-X^2)\varphi''(X)-nX\varphi'(X).
\]
Taking $\varphi(x)=x$ gives
\[
 \Delta_{\overset{\lor}{g}}X=-nX.
\]
Taking $\varphi(x)=x^2$ and comparing with
$\Delta_{\overset{\lor}{g}}(X^2)=2X\Delta_{\overset{\lor}{g}}X+2|dX|^2$ gives
\[
 |dX|^2=1-X^2.
\]
\underline{Claim\,1.}\ \ ${\rm Vol}(M, {\overset{\lor}{g}}) = {\rm Vol}(S^n(1)) =:\sigma_n$. \\
\quad\\
{\it Proof of Claim\,{\rm 1}} :\ We first record the critical point structure of $X$. Since $M$ is compact and
$X$ is nonconstant, $X$ has a maximum and a minimum. At every critical
point, the second equation above gives $X=\pm1$; hence
\[
 \max_M X=1,
 \qquad
 \min_M X=-1,
\]
and the critical points of $X$ are precisely the points of the two sets
\[
 P_+:=\{X=1\},
 \qquad
 P_-:=\{X=-1\}.
\]
Let $p\in P_+$ and let $H=({}^{\overset{\lor}{g}}\nabla^2X)_p$. In geodesic normal coordinates
centered at $p$,
\[
 X(y)=1+\frac12\langle Hy,y\rangle+O(|y|^3).
\]
Comparing the quadratic terms in $|dX|^2=1-X^2$ gives
$H^2=-H$. Moreover,
\[
 {\rm tr}_{\overset{\lor}{g}}\,H=\Delta_{\overset{\lor}{g}}X(p)=-n.
\]
Thus every eigenvalue of $H$ is either $0$ or $-1$, and their sum is $-n$;
therefore $H=-I$. Similarly, at every point of $P_-$ one obtains $H^2=H$
and ${\rm tr}_{\overset{\lor}{g}}\,H=n$, hence $H=I$. It follows that $X$ is a Morse function
whose critical points have only indices $0$ and $n$.

Since $M$ is connected, Morse theory now gives
\[
 \#P_- =1.
\]
Indeed, every minimum creates a connected component of a small sublevel set,
and in the absence of critical points of index $1$ no two such components can
be joined. Applying the same argument to $-X$ gives
\[
 \#P_+=1.
\]
Denote the unique points in $P_+$ and $P_-$ by $p_+$ and $p_-$,
respectively.

Set
\[
 t=\arccos X.
\]
On $M\setminus\{p_+,p_-\}$, the two equations for $X$ imply
\[
 |dt|=1,
 \qquad
 \Delta_{\overset{\lor}{g}}t=(n-1)\cot t.
\]
For $0<\tau<\pi$, put
\[
 \Sigma_\tau:=\{t=\tau\},
 \qquad
 A(\tau):={\rm Area}(\Sigma_\tau).
\]
If $0<a<b<\pi$, the divergence theorem and the coarea formula give
\begin{align*}
 A(b)-A(a)
 &=\int_{\{a<t<b\}}\Delta_{\overset{\lor}{g}}t\,d\mu_{\overset{\lor}{g}}\\
 &=\int_a^b(n-1)\cot\tau\,A(\tau)\,d\tau.
\end{align*}
Hence
\[
 A'(\tau)=(n-1)\cot\tau\,A(\tau),
\]
and therefore
\[
 A(\tau)=C\sin^{n-1}\tau
\]
for some $C>0$.

Let $\sigma_{n-1}$ denote the volume of the unit round $(n-1)$-sphere. At $p_+$ we
have ${}^{\overset{\lor}{g}}\nabla^2X=-I$, and hence, with $r={\rm dist}_{\overset{\lor}{g}}(p_+,\cdot)$,
\[
 X=1-\frac12r^2+O(r^3),
 \qquad
 t=r+O(r^2).
\]
It follows that
\[
 A(\tau)=\sigma_{n-1}\tau^{n-1}+o(\tau^{n-1})
 \quad\text{as }\tau\downarrow0.
\]
Thus $C=\sigma_{n-1}$. The coarea formula now yields
\[
 V_{\overset{\lor}{g}}
 =\int_0^\pi A(\tau)\,d\tau
 =\sigma_{n-1}\int_0^\pi\sin^{n-1}\tau\,d\tau
 =\sigma_n.
\]
This completes the proof of Claim\,1.\\

Since $\overset{\lor}{g}$ is a Yamabe metric, then
\[
 Y(M,[g])
 =R_{\overset{\lor}{g}}V_{\overset{\lor}{g}}^{2/n}
 =n(n-1)\sigma_n^{2/n}
 =Y(S^n,[g_{{}_{S^n}}]).
\]
The equality case in Aubin's inequality therefore implies that
$(M,[g])$ is conformally diffeomorphic to the standard sphere
(cf.\,\cite{LP,Sc-1}). Let
$\Psi\colon M\longrightarrow S^n$ be such a conformal diffeomorphism and set
\[
 h=(\Psi^{-1})^*\overset{\lor}{g}.
\]
Then $h$ is conformal to $g_{{}_{S^n}}$ and has scalar curvature $n(n-1)$.
By Obata's classification \cite{Obata}, there are a constant $c>0$ and a
conformal diffeomorphism $\phi\colon S^n\longrightarrow S^n$ such that
\[
 h=c\,\phi^*g_{{}_{S^n}}.
\]
Taking scalar curvatures gives
$n(n-1)=c^{-1}n(n-1)$, so $c=1$. Consequently,
\[
 \overset{\lor}{g}
 =\Psi^*h
 =(\phi\circ\Psi)^*g_{{}_{S^n}}.
\]
Thus $\phi\circ\Psi\colon(M, \overset{\lor}{g})\longrightarrow
(S^n,g_{{}_{S^n}})$ is an isometry.
\end{proof}
\quad\\
\begin{rmk}\label{rem:complete-finite-volume}
In view of the Myers theorem, it is natural to ask
whether the assumption that $M$ is closed in Theorem~\ref{thm:main} can
be replaced by completeness. In the finite-volume globally
minimizing setting, the answer is affirmative.

For a possibly noncompact Riemannian $n$-manifold $(M,g)$,
$n\geq 3$, define
\[
Y(M,[g])
:=
\inf_{0\neq u\in C_c^\infty(M)}
\frac{\displaystyle
\int_M
\left(
\alpha_n |du|_g^2+R_g u^2
\right)\,d\mu_g}
{\displaystyle
\left(
\int_M |u|^{\frac{2n}{n-2}}\,d\mu_g
\right)^{\frac{n-2}{n}}}.
\]
Let $(M^n, \widetilde{g})\ (\widetilde{g} \in [g])$ be complete and connected, and assume that
\[
V_{\widetilde{g}}:=\operatorname{Vol}(M, \widetilde{g}) < \infty,
\qquad
R_{\widetilde{g}}>0,
\]
where $R_{\widetilde{g}}$ is constant. Suppose moreover that
$\widetilde{g}$ is globally Yamabe minimizing in the sense that
\[
Y(M,[g])
=
R_{\widetilde{g}}\cdot V_{\widetilde{g}}^{2/n}.
\]
Then
\[
\operatorname{diam}(M, \widetilde{g})
\leq
\pi\sqrt{\frac{n(n-1)}{R_{\widetilde{g}}}},
\]
and consequently $M$ is compact.

Indeed, set
\[
d\mu:=\frac{1}{V_{\widetilde{g}}}d\mu_{\widetilde{g}}.
\]
The globally minimizing property gives
\[
\|u\|_{L^{\frac{2n}{n-2}}(d\mu)}^2
\leq
\|u\|_{L^2(d\mu)}^2
+
\frac{\alpha_n}{R_{\widetilde{g}}}
\int_M |du|_{\widetilde{g}}^2\,d\mu
\]
for every $u\in C_c^\infty(M)$. By a standard cutoff argument,
using completeness and finite volume, this inequality extends
to the bounded Lipschitz functions needed in the Bakry--Ledoux theorem.
Their diameter estimate therefore yields the asserted bound.
Since $(M, \widetilde{g})$ is complete and has finite diameter, the
Hopf--Rinow theorem implies that $M$ is compact.

In particular, there is no complete noncompact finite-volume
positive global Yamabe minimizer. In the globally minimizing
case, the preceding argument gives an alternative proof of the
non-completeness phenomenon observed by
Gro{\ss}e~\cite{Gro} and is closely related to the compactness
results of Deng~\cite{D}. Since the complete finite-volume
setting was already studied in these earlier works, we retain
the closedness assumption in Theorem~\ref{thm:main} and record the
observation only as a remark.
\end{rmk}

\section{The diameter estimate for positive relative Yamabe metrics}

In this section, we review the definitions of relative Yamabe constants and relative Yamabe metrics (cf.\,\cite{AB1}).
Let $W^n$ be a connected, compact $n$-manifold ($n \geq 3$) with boundary $\partial W \ne \emptyset$
and $\mathcal{M}_0(W)$ the space of all {\it relative metrics} on $W$, that is, all Riemannian metrics $g$ on $W$ with zero mean curvature $H_g = 0$ on $\partial W$.
For $g \in \mathcal{M}_0(W)$, set
$$
[g]_0 := \{\widetilde{g} \in [g]\ |\ H_{\widetilde{g}} = 0\ {\rm on}\ \partial W \}
= \{ u^{\frac{4}{n-2}}g\ |\ u \in C^{\infty}_{> 0}(W)\ {\rm with}\ \partial_{\nu}u = 0\ {\rm along}\ \partial W \},
$$
which is called the {\it relative conformal class} of $g$.
Here, $\nu$ denotes the unit normal (inward) vector field along $\partial W$.
The restriction $E|_{[g]_0}$ of $E$ to any relative conformal class $[g]_0$ is also always bounded from below.
Hence, we can define the {\it relative Yamabe constant} of $[g]_0\ (g \in \mathcal{M}_0(W))$
$$
Y(W, \partial W; [g]_0) := \inf_{\widetilde{g} \in [g]_0}E(\widetilde{g}),
$$
which is also a conformal invariant.
A series of papers of Cherrier\,\cite{Cr}, Escobar\,\cite{E}, Brendle and Chen\,\cite{BC}, Almaraz, Barbosa and Lima\,\cite{ABL},
and Brendle and Wang\,\cite{BW} asserts that,
each relative conformal class $[g]_0$ on $W$ always contains a relative metric $\overset{\lor}{g} \in [g]_0$,
called a {\it relative Yamabe metrics},
which realizes the minimum
$$
Y(W, \partial W; [g]_0) = E(\overset{\lor}{g}).
$$
This metric must have constant scalar curvature
$$
R_{\overset{\lor}{g}} = Y(W, \partial W; [g]_0)\cdot V_{\overset{\lor}{g}}^{-2/n},
$$
where $V_{\overset{\lor}{g}} = {\rm Vol}(W, \overset{\lor}{g})$.
Cherrier proved that
$$
Y(W, \partial W; C_0) \leq Y(S^n_+, S^{n-1}; [g_{{}_{S^n_+}}]_0) = n(n-1) V_{g_{{}_{S^n_+}}}^{2/n}
$$
for any relative conformal class $C_0$ on $W$.
Here, $S^n_+$ denotes the $n$-hemisphere with the standard metric
$g_{{}_{S^n_+}} := g_{S^n}|_{S^n_+}$.

For $g \in \mathcal{M}_0(W)$, the relative Yamabe constant $Y(W, \partial W; [g]_0)$ is also rewritten as
\begin{align*}
Y(W, \partial W; [g]_0)
&= \inf_{\substack{u \in C^{\infty}_{> 0}(W)\\ \partial_{\nu}u = 0\ {\rm along}\ \partial W}}\frac{\alpha_n\int_W|du|^2d\mu_g + \int_WR_gu^2d\mu_g}
{\Big{(}\int_W u^{2n/(n-2)}d\mu_g\Big{)}^{(n-2)/n}}\ \ \cdots\ (*)\\
&= \inf_{\substack{u \in W^{1, 2}(W) \\ u \not\equiv 0}}\frac{\alpha_n\int_W|du|^2d\mu_g + \int_WR_gu^2d\mu_g}
{\Big{(}\int_W|u|^{2n/(n-2)}d\mu_g\Big{)}^{(n-2)/n}}\qquad \ \cdots\ (**).
\end{align*}
Note that, without the Neumann boundary condition : $\partial_{\nu}u = 0$ along $\partial W$,
the infimum of the non-linear Dirichlet quotient $(**)$ is equal to that of $(*)$.
Let $\overset{\lor}{g} \in [g]_0$ be a relative Yamabe metric.
If $Y(W, \partial W; [g]) > 0$, the above and $R_{\overset{\lor}{g}} = Y(W, \partial W; [g])\cdot V_{\overset{\lor}{g}}^{-2/n}$ imply
$$
\Big{(}\int_W|u|^{2n/(n-2)}d\mu_{\overset{\lor}{g}}\Big{)}^{(n-2)/n} \leq
\frac{\alpha_n}{Y(W, \partial W; [g])}\int_W|du|^2d\mu_{\overset{\lor}{g}} + \frac{1}{V_{\overset{\lor}{g}}^{2/n}}\int_Wu^2d\mu_{\overset{\lor}{g}}
$$
for any $u \in W^{1,2}(W)$.
Moreover, mlutiplying the above by $\frac{1}{V_{\overset{\lor}{g}}^{(n-2)/n}}$, we also get
\begin{equation}
\Big{(}\int_W|u|^{2n/(n-2)}\frac{1}{V_{\overset{\lor}{g}}}d\mu_{\overset{\lor}{g}}\Big{)}^{(n-2)/n} \leq
\frac{\alpha_n}{R_{\overset{\lor}{g}}}\int_W|du|^2\frac{1}{V_{\overset{\lor}{g}}}d\mu_{\overset{\lor}{g}}
+ \int_Wu^2\frac{1}{V_{\overset{\lor}{g}}}d\mu_{\overset{\lor}{g}}
\end{equation}
for any $u \in W^{1,2}(W)$.
Here, we notice that $\frac{1}{V_{\overset{\lor}{g}}}d\mu_{\overset{\lor}{g}}$ is a probability measure on $W$,
that is, $\frac{1}{V_{\overset{\lor}{g}}}\mu_{\overset{\lor}{g}}(W) = 1$.
The diameter ${\rm diam}(W, {\overset{\lor}{g}})$ of $(W, {\overset{\lor}{g}})$ is also rewritten as
\begin{align*}
{\rm diam}(W, {\overset{\lor}{g}})
&= \sup\{ ||\widetilde{f}||_{L^{\infty}}\ \big{|}\ f \in W^{1, 2}(W),\ ||df||_{L^{\infty}} \leq 1 \} \\
&= \sup\{ |\widetilde{f}|\ \big{|}\ f \in W^{1, 2}(W) \cap C^0(W),\ ||df||_{L^{\infty}} \leq 1 \},
\end{align*}
where $\widetilde{f}(x, y) := f(x) - f(y)$ for $x, y \in W$ (see the proof of Theolem\,3.1 below for detail).
Remark that, since $(W, g)$ is connected and compact, there always exists $f \in W^{1,2}(W)$
such that $||df||_{\infty} \leq 1$ and $||\widetilde{f}||_{\infty} = {\rm diam}(W, g)$.

Theorems\,2.1 and 2.2 still holds even if $\partial M$ is not empty.
We can obtain the diameter estimate below :

\begin{thm}
Let ${\overset{\lor}{g}}$ be a positive relative Yamabe metric on a connected, compact $n$-manifold $W$ with boundary\,$(n \geq 3)$.
Assume that $R_{\overset{\lor}{g}} \geq n(n - 1)k^2$\ $($ for some $k > 0$ $)$.
Then,
$$
{\rm diam}(W, {\overset{\lor}{g}}) \leq \frac{\pi}{k}.
$$
Moreover, if ${\rm diam}(W, {\overset{\lor}{g}}) = \frac{\pi}{k}$,
then $(W, {\overset{\lor}{g}})$ is isometric to the round $n$-hemisphere $S^n_+(\frac{1}{k})$.
\end{thm}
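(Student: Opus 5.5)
We follow the proof of Theorem\,1.3, rescaling $\overset{\lor}{g}$ to $k^2\overset{\lor}{g}$ so that $k=1$. The Sobolev inequality (4) holds for all $u\in W^{1,2}(W)$ without boundary conditions. This is exactly hypothesis (2) of Theorem\,2.1 with $A=\alpha_n/R_{\overset{\lor}{g}}$ and the probability measure $V_{\overset{\lor}{g}}^{-1}d\mu_{\overset{\lor}{g}}$. Since Theorem\,2.1 remains valid on compact manifolds with boundary (the Bakry--Ledoux argument only uses Lipschitz test functions $f$ with $|df|\le1$ and the inequality (2)), we obtain
$$
{\rm diam}(W,\overset{\lor}{g})\le \pi\sqrt{n(n-1)/R_{\overset{\lor}{g}}}\le\pi .
$$
Here we use the fact that the intrinsic distance on $W$ equals the supremum of $|f(x)-f(y)|$ over $1$-Lipschitz $f$, so the two expressions for the diameter agree.

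For the equality case, $R_{\overset{\lor}{g}}=n(n-1)$. Theorem\,2.2, also valid with boundary, gives $X=\sin f$, nonconstant with values in $[-1,1]$, satisfying the identity $\Delta\varphi(X)=(1-X^2)\varphi''(X)-nX\varphi'(X)$ weakly. Because equality holds in the Sobolev inequality without boundary constraint, the extremal is a Neumann critical point, so the identities should be read with the natural boundary condition $\partial_\nu X=0$ on $\partial W$. As before, this yields
$$
\Delta X=-nX,\qquad |dX|^2=1-X^2,\qquad \partial_\nu X=0 .
$$
I would then redo the Morse-theoretic analysis. At interior critical points $X=\pm1$ and the Hessian is $\mp I$. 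At boundary points, $\partial_\nu X=0$ means critical points of $X|_{\partial W}$ are critical points of $X$, so again $X=\pm1$ there. Since $H_{\overset{\lor}{g}}=0$, the boundary behaves like a reflection plane to second order, and the Hessian analysis still gives $\nabla^2X=\mp I$. The connectedness argument then shows there is one maximum point $p_+$ and one minimum point $p_-$.

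Set $t=\arccos X$. Away from $p_\pm$ we have $|dt|=1$, $\Delta t=(n-1)\cot t$ and $\partial_\nu t=0$, so the divergence theorem picks up no boundary flux. Hence $A(\tau)={\rm Area}(\Sigma_\tau)$ again satisfies $A'=(n-1)\cot\tau\,A$, so $A(\tau)=C\sin^{n-1}\tau$. The constant $C$ is fixed by the asymptotics near $p_+$. The key point is that $p_+$ must lie on $\partial W$ and its small level sets are half-spheres, giving $C=\sigma_{n-1}/2$.

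To see that $p_\pm\in\partial W$, note that $t$ is a distance function from $p_+$ whose level sets meet $\partial W$ orthogonally, and $\partial W$ is nonempty. Concretely, take a boundary point $q$ and follow the gradient flow of $t$, which is tangent to $\partial W$; it must end at $p_+$ and at $p_-$ along $\partial W$. Integrating $A(\tau)$ then gives $V_{\overset{\lor}{g}}=\sigma_n/2={\rm Vol}(S^n_+)$. Consequently
$$
Y(W,\partial W;[g]_0)=n(n-1)\bigl(\sigma_n/2\bigr)^{2/n},
$$
which is equality in Cherrier's inequality.

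To conclude, I would invoke the rigidity in the equality case of Cherrier/Escobar: $(W,[g]_0)$ is conformally equivalent to the standard hemisphere. Then an Obata/Escobar type uniqueness applies: a metric with $H=0$ and $R=n(n-1)$ in the class of $S^n_+$ is, up to a conformal diffeomorphism, the round one. This gives an isometry with $S^n_+(1)$. Alternatively, and more self-containedly, one can double $W$ across the totally geodesic-like boundary and argue via reflection. Or one can directly use $\nabla^2X=-Xg$: it follows from the Bochner formula once the Hessian is shown to be $-Xg$, and then the metric is $dt^2+\sin^2t\,g_{\Sigma}$ with $\Sigma$ a round hemisphere.

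\textbf{Main obstacle.} I expect the hardest step to be the boundary part of the equality case. This means justifying the validity of Theorem\,2.2 and the Neumann condition on manifolds with boundary, and locating $p_\pm$ on $\partial W$ to get the half-sphere asymptotics. I would first try reflection: double $W$ along $\partial W$, using $\partial_\nu X=0$ and $H=0$, to reduce to the closed case.
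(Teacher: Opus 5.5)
Your proposal is correct and follows the paper's route. The steps are: Bakry--Ledoux applied to the Neumann form; the natural boundary condition $\partial_\nu X=0$ from unconstrained variations of the extremals (the paper's Claim~2); Morse analysis with the boundary-tangent gradient flow; $C=\sigma_{n-1}/2$; equality in Cherrier's inequality; then reflection plus Obata on the model hemisphere. Your flow-line argument placing $p_\pm$ on $\partial W$ is a valid substitute for the paper's use of the extrema of $X|_{\partial W}$ on each boundary component. Two side remarks should not be relied on, though. First, the boundary Hessian identity comes from Taylor comparison on a half-ball and uses nothing about $H_{\overset{\lor}{g}}=0$; a minimal boundary is not a reflection plane to second order, so doubling $W$ itself gives in general only a Lipschitz metric (the paper reflects only on the round hemisphere, whose equator is totally geodesic). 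Second, the Bochner route to $\nabla^2X=-Xg$ would need a Ricci lower bound that you do not have.
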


For the proof of the equality case in Theorem\,3.1, we also prepare the following result:

\begin{thm}$(${\rm Bakry--Ledoux}\,\cite[Theorems\,4]{BL}$).$
Let $(W, g)$ be a connected, compact Riemannian $n$-manifold with boundary $(n \geq 3)$ satisfying the Sobolev inequality $(4)$.
Assume that there exists a function $f \in W^{1,2}(W)$ such that $||df||_{\infty} \leq 1$ and $||\widetilde{f}||_{\infty} = \pi$.
Then, there exists a nonconstant function $u \in W^{1,2}(W)$ such that the equality holds in $(4)$ as below $:$
\begin{equation}
||u||^2_{2n/(n-2)} = A||du||^2_2 + ||u||^2_2.
\end{equation}
Moreover, if we set $X := \sin(f)$, for every $\varphi \in C^{\infty}(\mathbb{R})$,
$$
\Delta_g\varphi(X) = (1 - X^2)\varphi''(X) - nX\varphi'(X).
$$
\end{thm}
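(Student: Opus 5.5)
The plan is to repeat the argument of Bakry--Ledoux \cite[Theorem\,4]{BL} verbatim on $W$, and to check that the presence of $\partial W$ never enters. Their proof is purely variational. It uses only three ingredients: the Sobolev inequality for \emph{all} $W^{1,2}$ functions, the chain rule for Lipschitz functions, and the integration by parts formula $\int_W \psi\,\Delta_g\varphi\,d\mu=-\int_W\langle d\psi,d\varphi\rangle\,d\mu$. The inequality $(4)$ holds for every $u\in W^{1,2}(W)$ with no boundary condition imposed. So throughout I read $\Delta_g$ as the Neumann Laplacian, i.e.\ the generator of the Dirichlet form $\int_W|du|^2d\mu$ on $W^{1,2}(W)$. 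Then the integration by parts formula holds for all $\psi\in W^{1,2}(W)$ with no boundary term, exactly as on a closed manifold. With this convention the identity to be proved, $\Delta_g\varphi(X)=(1-X^2)\varphi''(X)-nX\varphi'(X)$, is understood weakly. It means: for every $\psi\in W^{1,2}(W)$, $-\int_W\varphi'(X)\langle dX,d\psi\rangle\,d\mu=\int_W\psi\,[(1-X^2)\varphi''(X)-nX\varphi'(X)]\,d\mu$. This in particular encodes $\partial_\nu X=0$ along $\partial W$.

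First, normalize $A=\frac{4}{n(n-2)}$; this is the only value compatible with $\|\widetilde f\|_\infty=\pi$, by Theorem\,2.1, which I take to hold on $W$ as stated before Theorem\,3.1. Set $X=\sin f$; it is Lipschitz with $|dX|^2\le 1-X^2$. Following Bakry--Ledoux, I would plug into $(4)$ the one-parameter family $u_\lambda=(1+\lambda X)^{-(n-2)/2}$, $\lambda\in(-1,1)$. These are the functions that realize equality on the round sphere when $X$ is a height function. Combined with $|dX|^2\le1-X^2$, this yields a differential inequality in $\lambda$ for the moments $F(\lambda)=\int_W(1+\lambda X)^{-p}\,d\mu$. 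Comparison with the model ODE, whose solution corresponds to the law of a coordinate function on $S^n$, gives an a priori bound. From that bound one reads off that $\|\widetilde f\|_\infty\le\pi$.

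In the equality case $\|\widetilde f\|_\infty=\pi$, I would show that every inequality in this chain is an equality. This gives three things. First, $|dX|^2=1-X^2$ a.e. Second, the distribution of $X$ under $V^{-1}d\mu$ coincides with that of $x_{n+1}$ on $S^n$, with density proportional to $(1-x^2)^{(n-2)/2}$. Third, some $u_\lambda$ ($\lambda\neq0$) attains equality in $(4)$, which gives the required nonconstant extremal $u$. An extremal of $(4)$ is a critical point of the functional on all of $W^{1,2}(W)$. Its Euler--Lagrange equation, tested against arbitrary $\psi\in W^{1,2}(W)$, gives the weak Neumann equation for $u_\lambda$. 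Varying $\lambda$ and differentiating in $\lambda$ then produces $\Delta_gX=-nX$ weakly. Applying the chain rule with $|dX|^2=1-X^2$ extends this to $\Delta_g\varphi(X)=(1-X^2)\varphi''(X)-nX\varphi'(X)$ for all smooth $\varphi$.

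I expect the main obstacle to be the rigidity step: extracting pointwise equalities from equality of the integrated quantities, which needs the monotonicity/comparison argument of \cite{BL} to be strict unless $|dX|^2=1-X^2$. The first thing I would try is to write the defect $\int_W(1+\lambda X)^{-p-2}\big[(1-X^2)-|dX|^2\big]\,d\mu\ge0$ explicitly. I would then show that it appears with a positive coefficient in the derivative of the comparison quantity, so that vanishing of the total defect forces it to vanish a.e. The boundary adds nothing here: only the measure $\mu$ and the Dirichlet form are involved.
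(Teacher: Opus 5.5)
Your route is the one the paper takes. You read $\Delta_g$ as the Neumann generator of the form $\int_W\langle dv,dw\rangle\,d\mu$ on $W^{1,2}(W)$, and you observe that the Bakry--Ledoux argument uses only three things: the Sobolev inequality on all of $W^{1,2}(W)$, the chain rule, and integration by parts without boundary term. Then, as in Claim~2, you obtain $\int_W\langle dX,d\psi\rangle\,d\mu=n\int_W X\psi\,d\mu$ for all $\psi\in W^{1,2}(W)$ by differentiating the Euler--Lagrange equations of the extremals $u_\lambda$ at $\lambda=0$.

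Like the paper, you import the saturation of the comparison from \cite[Theorem~4]{BL}. Be aware that this saturation is the real content, not the passage to pointwise identities. Write $u_\lambda=\varphi_\lambda(X)$ and suppose $\|u_\lambda\|_{2n/(n-2)}^2=A\int_W\varphi_\lambda'(X)^2(1-X^2)\,d\mu+\|u_\lambda\|_2^2$ is known. Then $\int_W\varphi_\lambda'(X)^2\bigl[(1-X^2)-|dX|^2\bigr]\,d\mu=0$ with a nonnegative integrand, and $\varphi_\lambda'\neq0$ for $\lambda\neq0$, so $|dX|^2=1-X^2$ a.e.\ follows at once. The defect computation you propose therefore does not address the actual difficulty.

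One step is wrong as written. Theorem~2.1 together with $\|\widetilde f\|_\infty=\pi$ only yields $A\geq\frac{4}{n(n-2)}$, so the normalization is not forced. For larger $A$ the conclusion actually fails. The Euclidean ball of radius $\pi/2$ centered at $0$ satisfies an inequality of the form (4) with some large $A$. Yet $f=x_1$ has $\|df\|_\infty=1$ and $\|\widetilde f\|_\infty=\pi$, while $X=\sin x_1$ has $\Delta X=-X$ and $\partial_\nu X\neq0$. The value $A=\frac{4}{n(n-2)}$ must therefore be supplied as a hypothesis. In the paper it comes from $R_{\overset{\lor}{g}}=n(n-1)$, which is established in the proof of Theorem~3.1 before Theorem~3.2 is invoked.
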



\begin{proof}[Proof of Theorem\,3.1]
Same as the proof of Theorem\,1.3, we can assume that $k = 1$.
Put $d\mu=\frac{1}{V_{\overset{\lor}{g}}}d\mu_{\overset{\lor}{g}}$. Consider the closed quadratic form
\[
 {\mathcal E}(v,w)=\int_W\langle dv,dw\rangle\,d\mu,
 \qquad
 D({\mathcal E})=W^{1,2}(W).
\]
Because its form domain is the full Sobolev space $W^{1,2}(W)$, the associated
self-adjoint Markov generator is the Neumann Laplacian. Its carr\'{e} du champ
is
\[
 \Gamma(v,v) = |dv|^2,
\]
and its intrinsic distance is
\[
 d_{\mathcal E}(x,y)
 :=\sup\{|v(x)-v(y)|\ \big{|}\ v\in W^{1,2}(W)\cap C^0(W),\ ||dv||_{L^{\infty}} \leq1\}
 ={\rm dist}_{\overset{\lor}{g}}(x,y).
\]
Indeed, the inequality $d_{\mathcal E}\leq {\rm dist}_{\overset{\lor}{g}}$ follows by
integrating $dv$ along curves, while the reverse inequality follows by using
the distance function from a fixed point. Thus the diameter in the abstract
Bakry--Ledoux theorem is exactly the Riemannian length-space diameter of
$(W, \overset{\lor}{g})$.

Applying Theorem\,2.1 to the Neumann form and to the Sobolev inequality\,$(2)$,
with $A=\alpha_n/R_{\overset{\lor}{g}}$, gives
\[
 {\rm diam}(W, \overset{\lor}{g})
 \leq\pi\sqrt{\frac{n(n-1)}{R_{\overset{\lor}{g}}}}
 \leq\pi.
\]
This proves the diameter estimate.

Assume now that ${\rm diam}(W, \overset{\lor}{g})=\pi$. Then equality in the preceding
chain forces
\[
 R_{\overset{\lor}{g}}=n(n-1),
 \qquad
 \frac{\alpha_n}{R_{\overset{\lor}{g}}}=\frac{4}{n(n-2)}.
\]
The equality statement in Theorem\,3.2, interpreted for the Neumann generator,
gives a nonconstant function $X\colon W\longrightarrow[-1,1]$ satisfying
\[
 \Delta_{\overset{\lor}{g}}\varphi(X)
 =(1-X^2)\varphi''(X)-nX\varphi'(X)
 \qquad\text{for every }\varphi\in C^\infty(\mathbb R).
\]
\underline{Claim\,2.}\quad $X \in C^{\infty}(W),\quad \partial_{\nu}X = 0$\ \ on \ $\partial W$.\\
\quad \\
{\it Proof of Claim\,{\rm 2}} :\ The Sobolev inequality\,(4) gives
\begin{equation}
\mathcal{F}(u) := \frac{4}{n(n-2)}\int_W|du|^2_{\overset{\lor}{g}}d\mu + \int_Wu^2d\mu
- \Big{(}\int_W|u|^{2n/(n-2)}d\mu\Big{)}^{(n-2)/n} \geq 0
\end{equation}
for any $u\in W^{1,2}(W)$.
In the Bakry-Ledoux framework\,\cite{BL}, we take $\mathcal A=\operatorname{Lip}(W)$.

Let $f\in\operatorname{Lip}(W)$ satisfy
$\|df\|_{L^\infty}\leq 1$ and
$\Vert \tilde f \Vert_\infty = \max_W f-\min_W f=\pi$.
Translate $f$ as in Theorem\, 3.2 so that if $X=\sin f$, then
\[
  \int_W X\,d\mu=0,\qquad -1\leq X\leq 1.
\]
Also by Theorem\,3.2,
\[
  u_\lambda=(1+\lambda X)^{- (n-2)/2},\qquad |\lambda|<1,
\]
satisfies $\mathcal F(u_\lambda)=0$.  Each $u_\lambda$ is positive and
Lipschitz, since $1+\lambda X\geq 1-|\lambda|>0$.

Fix $\psi\in C^\infty(W)$, smooth up to the boundary, without imposing
any boundary condition on $\psi$.  Since the inequality\,(6) holds on
all of $W^{1,2}(W)$, the variations $u_\lambda+t\psi$ are admissible.
Differentiating $\mathcal F(u_\lambda+t\psi)$ at $t=0$ and dividing by
$2$, therefore gives
\begin{equation}\label{eq:BLN-first-variation}
  \frac{4}{n(n-2)}\int_W\langle du_\lambda,d\psi\rangle_{\overset{\lor}{g}}\,d\mu
  +\int_W u_\lambda\psi\,d\mu
  =S_\lambda^{\,- 2/n}
    \int_W u_\lambda^{(n+2)/(n-2)}\psi\,d\mu,
 \end{equation}
where $S_\lambda :=\int_W u_\lambda^{2n/(n-2)}\,d\mu$.
Because $X$ is Lipschitz, the map $\lambda\mapsto u_\lambda$ is smooth
with values in $W^{1,\infty}(W)$ near $0$.  We may thus differentiate
\eqref{eq:BLN-first-variation} at $\lambda=0$.  At this parameter value,
\[
u_0=1,\qquad
  \left.\frac{d}{d\lambda}u_\lambda\right|_{\lambda=0}= - \frac{n-2}{2} X,
  \qquad S_0=1,\qquad
  S'_0 = - n\int_W X\,d\mu = 0.
\]

Hence we obtain
\[
  \frac{4}{n(n-2)}\int_W\langle dX,d\psi\rangle_{\overset{\lor}{g}}\,d\mu
  +\int_W X\psi\,d\mu
  = \frac{n+2}{n-2}\int_W X\psi\,d\mu.
\]
Since $C^\infty(W)$ is dense in $W^{1,2}(W)$, we have
\begin{equation}\label{eq:BLN-weak-eigenfunction}
  \int_W\langle dX,d\psi\rangle_{\overset{\lor}{g}}\,d\mu
  =n\int_W X\psi\,d\mu
  \qquad\text{for every }\psi\in W^{1,2}(W).
\end{equation}

Taking test functions supported in the interior first gives
$-\Delta_{\overset{\lor}{g}}X=nX$ in the interior $W^\circ$ of $W$.
But \eqref{eq:BLN-weak-eigenfunction} holds for test
functions with arbitrary boundary traces: it is the variational
formulation of the Neumann eigenvalue problem.
Hence boundary elliptic regularity gives $X\in W^{2,2}(W)$, and bootstrapping
the eigenvalue equation gives $X\in C^\infty(W)$, particularly smooth up to
$\partial W$.

To see that $X$ satisfies the Neumann condition, let $\nu$ denote the inward
unit normal and set
$d\sigma=\frac{1}{V_{\overset{\lor}{g}}}d\sigma_{\overset{\lor}{g}}$.
Green's formula then reads
\[
  \int_W\langle dX,d\psi\rangle_{\overset{\lor}{g}}\,d\mu
  =-\int_W\psi\,\Delta_{\overset{\lor}{g}}X\,d\mu
   -\int_{\partial W}\psi\,\partial_\nu X\,d\sigma.
\]
Comparing this with \eqref{eq:BLN-weak-eigenfunction} and using
$-\Delta_{\overset{\lor}{g}}X=nX$, we obtain
\[
  \int_{\partial W}\psi\,\partial_\nu X\,d\sigma=0
  \qquad\text{for every }\psi\in C^\infty(W).
\]
  Hence
\[
  \partial_\nu X=0\qquad\text{on }\partial W.
\]
This completes the proof of Claim\,2. \\

Taking $\varphi(x)=x$ and $\varphi(x)=x^2$ gives
\[
 \Delta_{\overset{\lor}{g}}X=-nX,
 \qquad
 |dX|^2=1-X^2.
\]
We next determine the critical points of $X$. At an interior extremum,
$dX=0$. At a boundary extremum, the tangential derivatives vanish and the
Neumann condition gives the normal derivative equal to zero, so again $dX=0$.
It follows from $|dX|^2=1-X^2$ that
\[
 \max_WX=1,
 \qquad
 \min_WX=-1.
\]
Every critical point lies in
\[
 P_+:=\{X=1\}
 \quad\text{or}\quad
 P_-:=\{X=-1\}.
\]
At a point of $P_+$, comparison of the quadratic terms in
$|dX|^2=1-X^2$, in an interior normal coordinate chart or in a boundary
half-coordinate chart, gives
\[
 ({}^{\overset{\lor}{g}}\nabla^2X)^2 = - {}^{\overset{\lor}{g}}\nabla^2X.
\]
Since ${\rm tr}_{\overset{\lor}{g}}({}^{\overset{\lor}{g}}\nabla^2X)=\Delta_{\overset{\lor}{g}}X=-n$ there,
\[
{}^{\overset{\lor}{g}}\nabla^2X=-I\quad\text{on }P_+.
\]
Similarly,
\[
 {}^{\overset{\lor}{g}}\nabla^2X=I\quad\text{on }P_-.
\]
Thus all critical points are nondegenerate and isolated.

The connectedness argument from Morse theory has a boundary version here.
On every compact slab $\{a\leq X\leq b\}\subset\{-1<X<1\}$, the vector field
\[
 Z=\frac{{}^{\overset{\lor}{g}}\nabla X}{|dX|^2}
\]
is smooth, satisfies $Z(X)=1$, and is tangent to $\partial W$ because
$\partial_\nu X=0$. Its flow therefore preserves the boundary and identifies
all regular sublevel sets between consecutive levels. For a level just above
$-1$, the sublevel set is a disjoint union of one ball for each interior
minimum and one half-ball for each boundary minimum. Since there are no other
critical points, these components cannot merge. On the other hand,
$W\setminus P_+$ is connected because $W$ is connected, $n\geq3$, and $P_+$
is finite. Hence $P_-$ consists of one point. Applying the same argument to
$-X$ shows that $P_+$ also consists of one point.

Both critical points lie on the boundary. Indeed, on every component of the
compact manifold $\partial W$, the restriction of $X$ attains a maximum and a
minimum. At such points the tangential gradient vanishes, and the Neumann
condition makes them critical points of $X$ on $W$. The restriction cannot be
constant, since otherwise an entire boundary component would consist of
critical points. Since $X$ has only one maximum and one minimum, these two
points must both belong to $\partial W$; in particular, $\partial W$ is
connected.

Set
\[
 t=\arccos X.
\]
On $W\setminus(P_+\cup P_-)$,
\[
 |dt|=1,
 \qquad
 \Delta_{\overset{\lor}{g}}t=(n-1)\cot t,
 \qquad
 \partial_\nu t=0\quad\text{on }\partial W.
\]
For $0<\tau<\pi$, define
\[
 \Sigma_\tau:=\{t=\tau\},
 \qquad
 A(\tau):={\rm Vol}_{n-1}(\Sigma_\tau).
\]
For $0<a<b<\pi$, the boundary of $\{a<t<b\}$ consists of $\Sigma_a$,
$\Sigma_b$, and the lateral part contained in $\partial W$. The flux through
the lateral part vanishes because $\partial_\nu t=0$. Consequently, the
divergence theorem and the coarea formula give
\begin{align*}
 A(b)-A(a)
 &=\int_{\{a<t<b\}}\Delta_{\overset{\lor}{g}}t\,d\mu_{\overset{\lor}{g}}\\
 &=\int_a^b(n-1)\cot\tau\,A(\tau)\,d\tau.
\end{align*}
Thus
\[
 A(\tau)=C\sin^{n-1}\tau
\]
for some $C>0$.

Let $p_+$ be the unique point where $X=1$. Since $p_+\in\partial W$ and
${}^{\overset{\lor}{g}}\nabla^2X(p_+)=-I$, a smooth boundary chart centered at $p_+$ identifies
$W$, to first order, with a Euclidean half-space and gives
\[
 t(y)=|y|+O(|y|^2).
\]
Therefore the level hypersurface $\Sigma_\tau$ is asymptotic to a Euclidean
hemisphere, and
\[
 A(\tau)=\frac{\sigma_{n-1}}2\tau^{n-1}
 +o(\tau^{n-1})
 \quad\text{as }\tau\downarrow0.
\]
It follows that $C=\sigma_{n-1}/2$. Hence
\[
 V_{\overset{\lor}{g}}
 =\int_0^\pi A(\tau)\,d\tau
 =\frac{\sigma_n}{2}
 =V_{g_{{}_{S^n_+}}}.
\]
Since $\overset{\lor}{g}$ is a relative Yamabe metric,
\[
 Y(W,\partial W;[g]_0)
 =R_{\overset{\lor}{g}}V_{\overset{\lor}{g}}^{2/n}
 =n(n-1)\left(\frac{\sigma_n}{2}\right)^{2/n}
 =Y(S^n_+,S^{n-1};[g_{{}_{S^n_+}}]_0).
\]

The rigidity in the equality case of the optimal relative Yamabe inequality
implies that $(W,\partial W;[g]_0)$ is conformally diffeomorphic to the
standard hemisphere (cf.\,\cite{Cr}, \cite{E}, \cite{BC}, \cite{ABL}, \cite{BW}). Let
$\Psi\colon W\longrightarrow S^n_+$ be such a conformal diffeomorphism and set
\[
 h=(\Psi^{-1})^*\overset{\lor}{g}.
\]
Then
\[
 h=v^{4/(n-2)}g_{{}_{S^n_+}}
\]
for a smooth positive function $v$, and
\[
 R_h=n(n-1),
 \qquad
 H_h=0.
\]
Since the equator is minimal for $g_{{}_{S^n_+}}$, the conformal
transformation law for mean curvature gives
\[
 \partial_\nu v=0\quad\text{on }S^{n-1}.
\]
The function $v$ satisfies the Yamabe equation on $S^n_+$ with this Neumann
condition. Its even reflection across the equator is therefore a weak solution
of the same Yamabe equation on $S^n$, and elliptic regularity makes the
reflection a smooth positive function $\overline{v}$. Thus
\[
 \overline{h}=\overline{v}^{4/(n-2)}g_{{}_{S^n}}
\]
is a smooth metric of scalar curvature $n(n-1)$ on $S^n$. By Obata's theorem\,\cite{Obata}, there exists a conformal diffeomorphism
$\phi\colon S^n\longrightarrow S^n$ such that
\[
 \overline{h} = \phi^*g_{{}_{S^n}}.
\]
The reflected metric $\overline{h}$ is invariant under reflection across the
equator. Hence $\phi$ carries the equator to the fixed hypersphere of an
involutive round isometry, and this hypersphere is a great sphere. After
postcomposing $\phi$ with a round isometry, we may therefore assume that
$\phi(S^n_+)=S^n_+$. Restricting to $S^n_+$ gives
\[
 h=\phi^*g_{{}_{S^n_+}}.
\]
Consequently,
\[
 \overset{\lor}{g}
 =\Psi^*h
 =(\phi\circ\Psi)^*g_{{}_{S^n_+}},
\]
so $\phi\circ\Psi$ is an isometry from $(W, \overset{\lor}{g})$ to the unit round
hemisphere. Undoing the initial scaling proves that, in the equality case,
$(W, \overset{\lor}{g})$ is isometric to $S^n_+(\frac{1}{k})$.
\end{proof}

\section{The distance estimate between the two conical points for positive almost conical Yamabe metrics}

Let $(Z^{n-1}, h)$ be a connected, closed Riemannian $(n-1)$-manifold ($n \geq 3$).
We then consider the cylinder $(Z \times \mathbb{R}, \overline{h} := h + dt^2)$.
One can define the Yamabe constant $Y(Z \times \mathbb{R}, [\overline{h}])$ of it, similarly to that of the closed case, by
\begin{align*}
Y(Z \times \mathbb{R}, [\overline{h}]) &= \inf_{\substack{f \in C^{\infty}_0(Z \times \mathbb{R}) \\ f \not\equiv 0}}
\frac{\alpha_n\int_{Z \times \mathbb{R}}|df|^2d\mu_{\overline{h}} + \int_{Z \times \mathbb{R}}R_{\overline{h}}f^2d\mu_{\overline{h}}}
{\Big{(}\int_{Z \times \mathbb{R}}|f|^{\frac{2n}{n-2}}d\mu_{\overline{h}}\Big{)}^{\frac{n-2}{n}}}\\
&= \inf_{\substack{f \in W^{1, 2}(Z \times \mathbb{R}) \\ f \not\equiv 0}}
\frac{\alpha_n\int_{Z \times \mathbb{R}}|df|^2d\mu_{\overline{h}} + \int_{Z \times \mathbb{R}}R_{\overline{h}}f^2d\mu_{\overline{h}}}
{\Big{(}\int_{Z \times \mathbb{R}}|f|^{\frac{2n}{n-2}}d\mu_{\overline{h}}\Big{)}^{\frac{n-2}{n}}}.
\end{align*}
Note that $- \infty \leq Y(Z \times \mathbb{R}, [\overline{h}]) \leq Y(S^n, [g_{{}_{S^n}}])
= Y(S^{n-1} \times \mathbb{R}, [\overline{g}_{{}_{S^{n-1}}} := g_{{}_{S^{n-1}}} + dt^2])$,
which may take $- \infty$.
Indeed, if $(Z, h)$ be a closed manifold of negative scalar curvature, then $Y(Z \times \mathbb{R}, [\overline{h}]) = - \infty$.
We define the operator (\cite{AB2})
$$
\mathcal{L}_h := -\,\alpha_n\Delta_h + R_h\qquad {\rm on}\quad (Z, h),
$$
which is different from the conformal Laplacian $L_h := -\,\alpha_{n-1}\Delta_h + R_h$ of $(Z, h)$.
Note that, if the first eigenvalue  $\lambda_1(\mathcal{L}_h)$ of $\mathcal{L}_h$ is positive, then  $Y(Z \times \mathbb{R}, [\overline{h}]) > 0$.
Hence, the positivity of $R_h$ also implies that $Y(Z \times \mathbb{R}, [\overline{h}]) > 0$.
Let $Q_{\bar{h}}$ denote the non-linear Dirichlet quotient associated to $Y(Z \times \mathbb{R}, [\overline{h}])$, that is,
$$
Q_{\overline{h}}(f) := \frac{\alpha_n\int_{Z \times \mathbb{R}}|df|^2d\mu_{\overline{h}} + \int_{Z \times \mathbb{R}}R_{\overline{h}}f^2d\mu_{\overline{h}}}
{\Big{(}\int_{Z \times \mathbb{R}}|f|^{\frac{2n}{n-2}}d\mu_{\overline{h}}\Big{)}^{\frac{n-2}{n}}}\qquad
{\rm for}\quad f \in W^{1, 2}(Z \times \mathbb{R})\ \ {\rm with}\ \ f \not\equiv 0.
$$
Let $\varphi_h \in C^{\infty}_{> 0}(Z)$ also denote the first eigenfunction of $\mathcal{L}_h$ with $\max_Z\varphi_h = 1$.

We now recall the following theorem (\cite[Theorems\,6.1 and 6.2]{AB2})\,:

\begin{thm}
Under these settings, we assume that $\lambda_1(\mathcal{L}_h) > 0$. \\
$({\rm i})$\ \ Assume that
$$
Y(Z \times \mathbb{R}, [\overline{h}]) < Y(S^n, [g_{{}_{S^n}}]) = Y(S^{n-1} \times \mathbb{R}, [\overline{g}_{{}_{S^{n-1}}}]).
$$
Then, there exists a positive function $u \in C^{\infty}_{> 0}(Z \times \mathbb{R})\,\cap\,W^{1,2}(Z \times \mathbb{R})$
with $\int_{Z \times \mathbb{R}}u^{\frac{2n}{n-2}}d\mu_{\overline{h}} = 1$ such that
$$
Q_{\overline{h}}(u) = Y(Z \times \mathbb{R}, [\overline{h}]),
$$
that is, $u$ is a Yamabe minimizer.
Moreover, for any constant $a > 0$ satisfying $0 < a < (\min_Z\varphi_h)^{2/(n-2)}\cdot a_0$,
there exist positive constants $\overline{C}, \underline{C} > 0$ and $\ell > 0$ such that
\begin{equation}
\underline{C}\cdot e^{-a_0|t|} \leq u(x, t) \leq \overline{C}\cdot e^{-a|t|}
\end{equation}
for $(x, t) \in Z \times [\ell, \infty)$, where $a_0 := \sqrt{\frac{\lambda_1(\mathcal{L})}{\alpha_n}} > 0$. \\
$({\rm ii})$\ \ Assume that
$$
Y(Z \times \mathbb{R}, [\overline{h}]) = Y(S^n, [g_{{}_{S^n}}]) = Y(S^{n-1} \times \mathbb{R}, [\overline{g}_{{}_{S^{n-1}}}]).
$$
Then, $(Z, h)$ is homothetic to the round $(n - 1)$-sphere $S^{n-1}(1) = (S^{n-1}, g_{{}_{S^{n-1}}})$.
\end{thm}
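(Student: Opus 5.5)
The proof splits into an existence-and-decay part (i) and a rigidity part (ii). Throughout I work directly with the translation-invariant quotient $Q_{\overline h}$ on the cylinder.

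For (i), I would use a concentration--compactness argument in the spirit of Lions.

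First, since $\lambda_1(\mathcal L_h)>0$, separating variables shows that the quadratic form $\alpha_n\int|df|^2+\int R_{\overline h}f^2$ dominates $c\|f\|_{W^{1,2}}^2$. Hence a minimizing sequence $(f_i)$ with $\int f_i^{2n/(n-2)}d\mu_{\overline h}=1$ is bounded in $W^{1,2}$. Replacing $f_i$ by $|f_i|$, we may take $f_i\geq0$.

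Next, using $t$-translation invariance, I recenter each $f_i$ so that its mass is balanced about $t=0$. There are three ways compactness could fail, and I rule out each in turn.
\begin{itemize}
\item \emph{Vanishing.} This is excluded because the local Sobolev inequality on the compact slabs $Z\times[j,j+1]$, combined with the $W^{1,2}$ bound, forces some slab to carry a definite amount of $L^{2n/(n-2)}$-mass.
\item \emph{Dichotomy.} Splitting the mass as $\theta+(1-\theta)$ would give $Y\geq Y(\theta^{(n-2)/n}+(1-\theta)^{(n-2)/n})>Y$, because $(n-2)/n<1$. This is a contradiction.
\item \emph{Concentration at a point.} The sharp local Sobolev constant at any point of an $n$-manifold is $Y(S^n,[g_{S^n}])$. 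So a bubble would force $Y(Z\times\mathbb R,[\overline h])\geq Y(S^n,[g_{S^n}])$, which is excluded by the strict hypothesis.
\end{itemize}
Hence a subsequence converges strongly to a minimizer $u\geq0$. Standard Trudinger regularity and the strong maximum principle then give $u\in C^\infty_{>0}$.

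For the two-sided decay estimate, I use the Euler--Lagrange equation
\[
-\alpha_n\partial_t^2u+\mathcal L_hu=Y u^{(n+2)/(n-2)} .
\]
Moser iteration on slabs gives $u\to0$ uniformly as $|t|\to\infty$.
\begin{itemize}
\item \emph{Lower bound.} Here $u$ is a supersolution of the linear operator $-\alpha_n\partial_t^2+\mathcal L_h$. The function $\epsilon\varphi_h e^{-a_0t}$ solves this linear equation exactly. Comparison on $Z\times[\ell,T]$ with $T\to\infty$ gives $u\geq\underline C e^{-a_0t}$.
\item \emph{Upper bound.} For $t\geq\ell$ we have $Yu^{4/(n-2)}\leq\epsilon$. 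I use a barrier of the form $w=\varphi_h^{\beta}e^{-at}$. Expanding $\mathcal L_h(\varphi_h^\beta)$ in terms of $\lambda_1\varphi_h^\beta$ and $|d\varphi_h|^2$, and choosing $\beta$ appropriately, shows that $w$ is a supersolution exactly when $a^2<\lambda_1(\mathcal L_h)\cdot(\text{power of }\min\varphi_h)/\alpha_n$. This is where the constraint $a<(\min_Z\varphi_h)^{2/(n-2)}a_0$ enters. I expect this barrier computation to need some care but to be routine.
\end{itemize}

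For (ii), I would prove the contrapositive: if $(Z,h)$ is not homothetic to the round sphere, then $Y(Z\times\mathbb R,[\overline h])<Y(S^n,[g_{S^n}])$.

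The first step is to build test functions of the form $f(x,t)=v(x)w(t)$. Take $w$ to be the one-dimensional profile $(\cosh(ct))^{-(n-2)/2}$, which realizes $Y(S^{n-1}\times\mathbb R,[\overline g_{S^{n-1}}])$ after the change of variables $t=\log r$ (that is, the spherical bubble on the cone $dr^2+r^2g_{S^{n-1}}$). Take $v$ to be a suitable positive function on $Z$.

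Substituting into $Q_{\overline h}$ decouples the quotient. It becomes a product of a $t$-quotient, which is explicitly computable, and a $Z$-quotient
\[
\frac{\alpha_n\int|dv|^2+\int R_hv^2}{(\int v^2)^{\cdots}(\int v^{2n/(n-2)})^{\cdots}} .
\]
The exponents are determined by Hölder's inequality in the product.

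The second step is to optimize over the scale $c$ and then over $v$. I would aim to bound $Y(Z\times\mathbb R,[\overline h])$ by $Y(S^n,[g_{S^n}])$ times a factor that is a power of a normalized Yamabe-type quantity of $(Z,h)$. This quantity should be $\leq$ its round value by Aubin's inequality in dimension $n-1$, applied to the conformal Laplacian after absorbing the gap $\alpha_n-\alpha_{n-1}>0$ into the gradient term.

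Equality throughout would force three things: equality in Aubin's inequality (so $h$ is conformally round), equality in Hölder (so $v$ is constant), and equality in the absorption of $\alpha_n-\alpha_{n-1}$. Together these give a round metric of constant curvature, that is, $h$ is homothetic to $g_{S^{n-1}}$.

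The main obstacle is getting the test-function inequality sharp enough. The mismatch between $\alpha_n$ and $\alpha_{n-1}$, and the nonlinear exponent, must combine so that equality is attained only in the round case. If the product ansatz turns out not to be sharp, the fallback is to transplant $u$ via the cone identification $dr^2+r^2h=r^2(h+dt^2)$. One would then compare the Yamabe quotient on the cone $C(Z)$ with a Euclidean isoperimetric/Sobolev inequality on cones, whose equality case characterizes $Z=S^{n-1}$.
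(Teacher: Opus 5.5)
The paper does not prove this statement; it quotes it from Akutagawa--Botvinnik (Theorems 6.1 and 6.2 of [AB2]). So your argument has to stand on its own.

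\textbf{Part (i) is sound.} Your route is standard: coercivity from $\lambda_1(\mathcal L_h)>0$, then concentration--compactness modulo $t$-translations, then barriers. In the compactness step, $Y>0$ and strict concavity of $s\mapsto s^{(n-2)/n}$ kill splitting, and $Y<Y(S^n,[g_{S^n}])$ kills bubbles. One correction concerns the upper barrier, which needs no power of $\min_Z\varphi_h$. With $\beta=1$,
\[
\bigl(-\alpha_n\partial_t^2+\mathcal L_h-\epsilon\bigr)(\varphi_h e^{-at})=\bigl(\lambda_1(\mathcal L_h)-\epsilon-\alpha_n a^2\bigr)\varphi_h e^{-at}.
\]
The maximum principle for $(u-\overline C\varphi_h e^{-at})/\varphi_h$ on $Z\times[\ell,\infty)$, together with $u\to0$, then gives $u\le\overline C e^{-at}$ for \emph{every} $a<a_0$. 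This contains the stated range. The constraint $a<(\min_Z\varphi_h)^{2/(n-2)}a_0$ does not "enter" there.

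\textbf{Part (ii) has a genuine gap.} First, the gap you want to absorb has the wrong sign: $\alpha_{n-1}-\alpha_n=\frac{4}{(n-2)(n-3)}>0$. Also, for $n=3$ there is no Aubin inequality on the surface $Z$.

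More seriously, the product ansatz cannot be fed into the $(n-1)$-dimensional Yamabe quotient. Optimizing $w(ct)$ over $c$ gives
\[
\min_c Q_{\overline h}(v\,w(c\,\cdot))=K_n\Bigl(\frac{E(v)\,\|v\|_2^{2/(n-1)}}{\|v\|_{2n/(n-2)}^{2n/(n-1)}}\Bigr)^{(n-1)/n},\qquad E(v)=\alpha_n\int_Z|dv|^2+\int_Z R_hv^2 ,
\]
with $K_n$ depending only on $n$ and $w$. Bounding the bracket above by $E(v)/\|v\|_{2(n-1)/(n-3)}^2$ would require
\[
\|v\|_2^{1/n}\,\|v\|_{2(n-1)/(n-3)}^{(n-1)/n}\le\|v\|_{2n/(n-2)} .
\]
That is the reverse of H\"older's interpolation inequality, and it fails strictly for nonconstant $v$.

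So the only usable choice is $v\equiv1$. It yields $Y(Z\times\mathbb R,[\overline h])\le Y(S^n,[g_{S^n}])\,\bigl(E_{n-1}(h)/E_{n-1}(g_{S^{n-1}})\bigr)^{(n-1)/n}$, where $E_{n-1}(h)=\int_ZR_h\,d\mu_h/\mathrm{Vol}(Z,h)^{(n-3)/(n-1)}$. This is the normalized total scalar curvature of $h$ itself, not its Yamabe constant. For $n\geq4$, every non-round $h$ conformal to $g_{S^{n-1}}$ has $E_{n-1}(h)>E_{n-1}(g_{S^{n-1}})$. In that case your bound is weaker than the trivial one.

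Moreover, a conformal change of $h$ on $Z$ does not preserve $[h+dt^2]$. So "equality in Aubin's inequality" reduces nothing. The cone fallback fails for a similar reason: sharp Sobolev or isoperimetric inequalities on $C(Z)$ need $\mathrm{Ric}_h\geq n-2$ and give \emph{lower} bounds. What you need is the strict \emph{upper} bound $Y(Z\times\mathbb R,[\overline h])<Y(S^n,[g_{S^n}])$ for non-round $Z$.

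\textbf{What is missing is a strict inequality of Aubin--Schoen type.}
\begin{itemize}
\item If $Z$ has constant curvature, then $\lambda_1(\mathcal L_h)>0$ forces $(Z,h)$ to be homothetic to $S^{n-1}/\Gamma$. The function $(\cosh t)^{-(n-2)/2}$ then gives $Y\le n(n-1)(\sigma_n/|\Gamma|)^{2/n}$, which is $<Y(S^n,[g_{S^n}])$ unless $\Gamma$ is trivial.
\item Otherwise $\overline h$ is not locally conformally flat. For $n\geq6$, Aubin's local test function at a point with $W_{\overline h}\neq0$ gives strict inequality.
\item For $3\le n\le5$ one needs a positive-mass type argument for the Green's-function blow-up of the cylinder. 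There the ends $t=\pm\infty$ become singular (conical) points, which is what makes this step hard. Nothing in your proposal addresses it.
\end{itemize}
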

\begin{rmk}
In the above case $({\rm i})$,  since a Yamabe minimizer $u \in C^{\infty}_{> 0}(Z \times \mathbb{R})$ satisfies the inequalities\,$(9)$,
we shall call the Yamabe metric $\widehat{h} := u^{\frac{4}{n-2}}\cdot\overline{h}$ an {\it almost conical} metric.
In the above case $({\rm ii})$, $u(t) := (\cosh\,t)^{-\frac{n-2}{2}} \in C^{\infty}_{> 0}(S^{n-1} \times \mathbb{R})$
is a Yamabe minimizer and that the Yamabe metric $u^{\frac{4}{n-2}}\cdot\overline{g}_{{}_{S^{n-1}}} = (\cosh\,t)^{-2}\cdot\overline{g}_{{}_{S^{n-1}}}$
is isometric to the round metric $g_{{}_{S^n}}$ on $S^n(1) - \{(0, \cdots, 0, 1), (0, \cdots, 0, -1)\}$.
In both cases,
adding two points $p_{-}$ and $p_+$ corresponding to $Z \times \{- \infty\}$ and $Z \times \{+ \infty\}$ respectively,
we can compactify metrically $(Z \times \mathbb{R}, \widehat{h} = u^{\frac{4}{n-2}}\cdot\overline{h})$ to obtain a compact metric space
$(\widehat{Z \times \mathbb{R}}, \widehat{d}\,)$.
Here, $\{p_{-}, p_+\}$ are the two conical points of $(\widehat{Z \times \mathbb{R}}, \widehat{d}\,)$.
Note also that the condition $\int_{Z \times \mathbb{R}}u^{\frac{2n}{n-2}}d\mu_{\overline{h}} = 1$ implies $V_{\widehat{h}} = 1$
and $R_{\widehat{h}} = Y(Z \times \mathbb{R}, [\overline{h}])$.
\end{rmk}

If $Y(Z \times \mathbb{R}, [\overline{h}]) > 0$, then $V_{\widehat{h}} = 1, R_{\widehat{h}} = Y(Z \times \mathbb{R}, [\overline{h}])$ and the following
$$
Y(Z \times \mathbb{R}, [\overline{h}]) = \inf\,\{ Q_{\overline{h}}(f)\ |\ f \in C^{\infty}_0(Z \times \mathbb{R})\ \ {\rm with}\ \ f \not\equiv 0 \}
= \inf\,\{ Q_{\widehat{h}}(f)\ |\ f \in C^{\infty}_0(Z \times \mathbb{R})\ \ {\rm with}\ \ f \not\equiv 0 \}
$$
imply
$$
\Big{(}\int_{Z \times \mathbb{R}}|u|^{2n/(n-2)}d\mu_{\widehat{h}}\Big{)}^{(n-2)/n} \leq
\frac{\alpha_n}{R_{\widehat{h}}}\int_{Z \times \mathbb{R}}|du|^2d\mu_{\widehat{h}}
+ \int_{Z \times \mathbb{R}}u^2d\mu_{\widehat{h}}
$$
for any $u \in W^{1,2}(Z \times \mathbb{R}; \widehat{h})$.

Theorems\,2.1 and 2.2 still hold even if $(M, g)$ has conical singularities.
with this understood, we can obtain the following, which is motivated by Gromov's band width estimate\,\cite{G}.
\begin{thm}
Let $(Z \times \mathbb{R}, \overline{h} = h + dt^2)$ be a cylinder with $Y(Z \times \mathbb{R}, [\overline{h}]) > 0$.
Assume that its almost conical Yamabe metric $\widehat{h}$ satisfies $R_{\widehat{h}} \geq n(n - 1)k^2$\ $($ for some $k > 0$ $)$.
Then,
$$
\widehat{d}\,(p_{-}, p_+) \leq \frac{\pi}{k}.
$$
\end{thm}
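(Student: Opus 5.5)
We follow the proof of Theorem~1.3, now applied to the compactified metric measure space $(\widehat{Z \times \mathbb{R}}, \widehat{d}, \mu_{\widehat{h}})$. After rescaling $\widehat{h}$ to $k^2\widehat{h}$ (and renormalizing the measure to a probability measure), we may assume $k=1$. By Remark~4.2 we have $V_{\widehat{h}}=1$ and $R_{\widehat{h}}=Y(Z\times\mathbb{R},[\overline{h}])$. The displayed Sobolev inequality preceding the theorem is then exactly inequality~(2), with $A=\alpha_n/R_{\widehat{h}}$ and with all norms taken with respect to the probability measure $d\mu_{\widehat{h}}$ on $Z\times\mathbb{R}$. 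The two added points $p_\pm$ have measure zero, so the same inequality holds on the completed space.

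The key step is to identify the quantity controlled by Bakry--Ledoux with $\widehat{d}(p_-,p_+)$. The Dirichlet form is
\[
\mathcal{E}(v,v)=\int_{Z\times\mathbb{R}}|dv|^2_{\widehat{h}}\,d\mu_{\widehat{h}},
\]
with domain $W^{1,2}(Z\times\mathbb{R};\widehat{h})$, and it is natural to take $\mathcal{A}=\operatorname{Lip}_b$. We then test with
\[
f(x):=\widehat{d}(p_-,x),
\]
a bounded $1$-Lipschitz function on $Z\times\mathbb{R}$. Its differential satisfies $|df|_{\widehat{h}}\le 1$ a.e., and $f\in W^{1,2}$ because the volume is finite and $f$ is bounded. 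The function $f$ extends continuously to the compactification with $f(p_-)=0$ and $f(p_+)=\widehat{d}(p_-,p_+)$. Consequently
\[
\|\widetilde f\|_{\infty}\ \ge\ \sup_{x,y\in Z\times\mathbb{R}}|f(x)-f(y)| = \widehat{d}(p_-,p_+),
\]
the last equality holding by taking $x\to p_+$ and $y\to p_-$ and using continuity of $\widehat{d}$.

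Since the proof of Theorem~2.1 only uses such an $f$ (via the functions $\sin f$ and $(1+\lambda\sin f)^{-(n-2)/2}$ inserted into~(2) together with the chain rule for $\Gamma$), it yields
\[
\widehat{d}(p_-,p_+)\le \|\widetilde f\|_\infty\le \pi\sqrt{\frac{n(n-2)}{4}\cdot\frac{\alpha_n}{R_{\widehat{h}}}}=\pi\sqrt{\frac{n(n-1)}{R_{\widehat{h}}}}\le\pi .
\]

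The main obstacle is justifying that the Bakry--Ledoux argument applies on the noncompact open manifold $Z\times\mathbb{R}$, or equivalently on the singular compactification. One needs the Sobolev inequality for the bounded Lipschitz test functions $\varphi\circ f$, and not only for $C^\infty_0$ functions. We would handle this as in Remark~2.4. Take cutoffs $\chi_R(t)$ equal to $1$ on $|t|\le R$, vanishing for $|t|\ge R+1$, with $|\partial_t\chi_R|\le 1$ in the $\overline{h}$-metric. Then
\[
|d\chi_R|_{\widehat{h}}=u^{-2/(n-2)}|d\chi_R|_{\overline{h}}.
\]
The error term in the energy is therefore bounded by $\int_{R\le|t|\le R+1} u^{-4/(n-2)}u^{2n/(n-2)}\,d\mu_{\overline h}=\int u^{2}\,d\mu_{\overline{h}}$ over the slab, which tends to $0$ by the decay estimate~(9) for $u$. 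Hence, for any bounded $v$ with $|dv|_{\widehat{h}}\in L^2$, the products $\chi_R v$ approximate $v$ in $W^{1,2}(\widehat{h})$, and the inequality extends by dominated convergence. Equivalently, the points $p_\pm$ have zero capacity, so the form on the completed space coincides with the one on $Z\times\mathbb{R}$. Once this density is established, the abstract Theorem~2.1 applies verbatim and gives the claimed bound.
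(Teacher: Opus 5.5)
Your proposal is correct and takes the same route as the paper. The paper's proof just invokes the Bakry--Ledoux bound (Theorem~2.1) with $A=\alpha_n/R_{\widehat h}$ on $(Z\times\mathbb{R},\widehat h)$ and notes $\widehat d(p_-,p_+)\le \mathrm{diam}(Z\times\mathbb{R},\widehat h)\le\pi$; you instead test directly with $f=\widehat d(p_-,\cdot)$, and you also give the cutoff argument, via $\int_{R\le|t|\le R+1}u^2\,d\mu_{\overline h}\to0$, that extends the Sobolev inequality from $C^\infty_0$ to bounded Lipschitz functions, which the paper only asserts. One harmless slip: you only get $\sup_{x,y}|f(x)-f(y)|\geq\widehat d(p_-,p_+)$, not equality, since that supremum is $\sup f$ and may exceed $\widehat d(p_-,p_+)$, but the inequality is all your argument uses.
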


\begin{proof}
Same as the proof of Theorem\,1.3, we can set $k = 1$. Then, by Theorem\,2.1, we obtain
$$
\widehat{d}\,(p_{-}, p_+) \leq {\rm diam}(Z \times \mathbb{R}, \widehat{h}) \leq \pi.
$$
\end{proof}

\begin{rmk} Remark that, under the condition $R_{\widehat{h}} \geq n(n - 1)$,
the equality $\widehat{d}\,(p_{-}, p_+) = \pi$ does not imply that
$(\widehat{Z \times \mathbb{R}}, \widehat{d}\,)$ is isometric to the round $n$-sphere $S^n(1)$.
Indeed, there exist many such examples of $(Z, h)$ below.
Let $\Gamma$ be a non-trivial spherical space form group acting freely on $S^{n-1}(1)$ by isometry.
Let $(Z, h) := S^{n-1}(1)/\Gamma$ be also the spherical space form, that is, the smooth quotient of $S^{n-1}(1)$ by $\Gamma$.
Then, $(\cosh\,t)^{-2}\cdot(h + dt^2)$ is an almost conical Yamabe metric
on $Z \times \mathbb{R}$ similar to Remark\,4.2,\,(ii) (cf.\,\cite[Section\,6]{AB2}).
This implies that
$$
\widehat{d}\,(p_{-}, p_+) = \pi,\qquad (\widehat{Z \times \mathbb{R}}, \widehat{d}\,) \ncong S^n(1).
$$
\end{rmk}

\vspace{10mm}

\bibliographystyle{amsbook}

\vspace{15mm}

\end{document}